\newtheorem{thm}[equation]{Theorem}
\newtheorem{corollary}[equation]{Corollary}
\newtheorem{lemma}[equation]{Lemma}
\newtheorem{proposition}[equation]{Proposition}
\theoremstyle{definition}
\numberwithin{equation}{section}
\title[Monomial Hilbert Transform] {Sparse Bounds for  Maximal Monomial \\ Oscillatory  Hilbert Transforms} 
\author{Ben Krause}
\address{
Department of Mathematics
The University of British Columbia \\
1984 Mathematics Road
Vancouver, B.C.
Canada V6T 1Z2}
\email{benkrause@math.ubc.ca}
\thanks{Research supported in part by  an NSF Postdoctoral Research Fellowship.}
\author{Michael T. Lacey}   
\address{ School of Mathematics, Georgia Institute of Technology, Atlanta GA 3034, USA}
\email {lacey@math.gatech.edu}
\thanks{Research supported in part by grant NSF-DMS 1265570 and  NSF-DMS-1600693.}
\begin{document}

\begin{abstract}
For each $ d \geq 2$, the  Hilbert transform with a polynomial oscillation as below  satisfies a $ (1, r )$ sparse bound, 
for all  $ r>1$
\begin{equation}
H _{ \ast  } f (x) = 
\sup _{\epsilon } \Bigl\lvert 
\int_{|y| > \epsilon}  f (x-y) \frac { e ^{2 \pi i  y ^d }} y\; dy  
\Bigr\rvert. 
\end{equation}
This quickly implies   weak-type inequalities for the maximal truncations,  which hold for $A_1$ weights, but are new even in the case of Lebesgue measure. 
The  unweighted weak-type estimate  \emph{without maximal truncations} but with arbitrary polynomials, is due to Chanillo and Christ (1987).  
\end{abstract} 

\maketitle

\section{Introduction}  

The theory of oscillatory singular integrals, initiated by Ricci and Stein \cites{MR822187,MR890662}, 
 concerns operators of the form 
\begin{equation} \label{e:TK}
T_P f (x) = \int e ^{i P  (x,y)}K (y) f (x-y) \; dy . 
\end{equation}
where $ K (y)$ is a Calder\'on-Zygmund kernel on $ \mathbb R ^{n}$, and $ P (x,y)$ is a polynomial of two variables.  
At this stage 
the $ L ^{p}$  theory of the same is advanced \cites {MR1879821}.  (Also see \cites{MR2545246,11054504}.) The $ L ^{1}$ theory is harder, with the dominant result being that of 
Chanillo and Christ  \cite{MR883667}.   Combining  \cites{MR890662,MR883667}, we have

\begin{thm}\label{t:fixedPoly}  For $ 1 < p < \infty $,  the operator $ T_P$ is bounded on $ L ^{p}$, that is 
\begin{equation*} 
\lVert T_P  \,:\,  L^{p} \mapsto L ^{p}\rVert  \lesssim  1, 
\end{equation*}
where the implied constant depends on the degree of $ P$, and in particular is independent of $ \lambda $.  
Moreover, $ T_P$  maps $ L ^{1}$ to weak $ L ^{1}$, with the same bound.  
\end{thm}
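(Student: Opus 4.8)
The plan is to prove the $L^2$ bound first, deduce the range $1<p<\infty$ by interpolation and duality, and treat the weak-type $(1,1)$ endpoint by a Calder\'on--Zygmund decomposition in which the oscillation of $P$ replaces the cancellation that the bad part of $f$ would ordinarily supply. Two standard reductions come first. A monomial of $P(x,y)$ depending on $x$ alone contributes only a unimodular factor $e^{iQ(x)}$, an $L^p$ isometry, so we may assume $P(x,0)\equiv 0$ and indeed that every monomial of $P$ genuinely involves $y$. And the adjoint $T_P^{\ast}$ is, up to sign, again an operator of the form \eqref{e:TK} with polynomial phase $\widetilde P(x,y)=-P(x-y,-y)$ of the same degree, so once the theorem is known for $1<p\le 2$ it follows for $2\le p<\infty$.

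For the $L^2$ estimate I would decompose the kernel dyadically, $K=\sum_{j\in\mathbb Z}K_j$ with $K_j$ smooth, supported where $|y|\sim 2^{j}$, and carrying the usual Calder\'on--Zygmund size, smoothness and cancellation estimates, and set $T_P=\sum_j T_j$. The claim then follows from a Cotlar--Stein almost-orthogonality scheme, the decay $\lVert T_j^{\ast}T_k\rVert_{L^2\to L^2}+\lVert T_jT_k^{\ast}\rVert_{L^2\to L^2}\lesssim 2^{-\epsilon|j-k|}$ being supplied by two mechanisms working in complementary ranges. At scales $2^j$ below the ``critical'' scale determined by the coefficients of $P$ — where $y\mapsto P(x,y)$ is essentially constant on $|y|\sim 2^j$ — the operator $T_j$ differs from the non-oscillatory piece $K_j\ast$ by a controllably small error, and the decay in $|j-k|$ comes from the \emph{cancellation} of the $K_j$'s exactly as in the non-oscillatory theory (integrate by parts, or use $\widehat{K_j}(\xi)\lesssim\min(|2^j\xi|,|2^j\xi|^{-1})$; the stationary phase of $P$ only helps). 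Above the critical scale, van der Corput's lemma in $y$ — some derivative of order $m\le\deg_y P$ of the phase is bounded below on $|y|\sim 2^j$ — forces $\lVert T_j\rVert_{L^2\to L^2}$ itself to be geometrically small, which dominates $\lVert T_j^{\ast}T_k\rVert\le\lVert T_j\rVert\lVert T_k\rVert$; around the finitely many critical scales $\lVert T_j\rVert\lesssim 1$ suffices. The coefficient bookkeeping — in particular that $\epsilon=\epsilon(\deg P)>0$ can be taken uniform — is organized by induction on $\deg_y P$, at each step peeling off the top-order monomial in $y$ by the rescaling $y\mapsto\lambda^{-1/\deg_y P}y$, which is harmless on $K$ since $K$ is essentially scale invariant. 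This gives $\lVert T_P\rVert_{L^2\to L^2}\lesssim 1$ with constant depending only on $\deg P$; Marcinkiewicz interpolation with the weak $(1,1)$ bound below then yields strong $L^p$ for $1<p\le 2$, and duality finishes $2\le p<\infty$.

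For the weak $(1,1)$ bound, fix $\alpha>0$ and perform the Calder\'on--Zygmund decomposition $f=g+\sum_Q b_Q$ at height $\alpha$, with $\lVert g\rVert_\infty\lesssim\alpha$, $\lVert g\rVert_2^2\lesssim\alpha\lVert f\rVert_1$, each $b_Q$ supported on a cube $Q$ with $\int b_Q=0$, $\sum_Q\lVert b_Q\rVert_1\lesssim\lVert f\rVert_1$ and $\sum_Q|Q|\lesssim\alpha^{-1}\lVert f\rVert_1$; dispose of $g$ via the $L^2$ bound and Chebyshev. Off the dilated cubes $\bigcup 2Q$ one must bound $\sum_Q\lVert T_Pb_Q\rVert_{L^1}$. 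Here is the obstacle and the point where the Chanillo--Christ idea enters: writing $T_Pb_Q(x)=e^{iP(x,c_Q)}\int e^{i[P(x,y)-P(x,c_Q)]}K(x-y)\,b_Q(y)\,dy$ and discarding the unimodular prefactor, the residual phase is a polynomial in $y$ vanishing at the center $c_Q$, but the $x$-variation of the original phase has destroyed the usual gain $K(x-y)-K(x-c_Q)=O(\ell(Q)\,\operatorname{dist}(x,Q)^{-n-1})$ that cancellation would exploit. The resolution is to split, for each cube $Q$ and each annulus $\{\,|x-c_Q|\sim 2^{j}\ell(Q)\,\}$, into the regime where the residual phase is nearly constant on $Q$ — where the classical cancellation estimate using $\int b_Q=0$ is recovered — and the regime where some $y$-derivative of the residual phase is large on $Q$ — where van der Corput \emph{in $y$}, over the cube $Q$, extracts the needed smallness; the two regimes are separated by a stopping-time over the sizes of the coefficients of the residual phase, and summing the resulting geometric series in $j$ and then over $Q$ with $\sum_Q|Q|\lesssim\alpha^{-1}\lVert f\rVert_1$ closes the estimate. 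The one genuine subtlety beyond bookkeeping is that the van der Corput step wants $b_Q$ to carry vanishing moments up to order $\deg_y P$, which is arranged by refining the decomposition or iterating; I expect reconciling the cancellation of $b_Q$ with van der Corput decay \emph{uniformly over all scales and all coefficient sizes} of $P$ to be the main difficulty, the $L^2$ part being a comparatively routine Cotlar--Stein-plus-van-der-Corput argument.
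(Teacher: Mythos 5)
You should first be aware that Theorem~\ref{t:fixedPoly} is not proved in this paper at all: it is quoted as background, with the $L^p$ bounds ($1<p<\infty$) due to Ricci--Stein \cite{MR890662} and the weak $(1,1)$ bound due to Chanillo--Christ \cite{MR883667}, so your proposal has to be measured against those arguments rather than anything in the present text. Your $L^2$ outline (dyadic decomposition, Cotlar--Stein with van der Corput above the critical scales and ordinary Calder\'on--Zygmund cancellation below, induction on the degree with rescaling, duality via $\widetilde P(x,y)=-P(x-y,-y)$) is essentially the standard route and is acceptable as a sketch, with the caveat that once $P$ genuinely depends on both variables the ``critical scale'' depends on $x$, which is precisely what the induction on degree in \cite{MR890662} is designed to tame; note also that Ricci--Stein obtain the full range $1<p<\infty$ without the endpoint, so the $L^p$ statement need not be routed through weak $(1,1)$ as you do.

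The weak $(1,1)$ part, however, has a genuine gap at its core. Your mechanism is to apply van der Corput in $y$, over the cube $Q$, to $\int e^{i[P(x,y)-P(x,c_Q)]}K(x-y)\,b_Q(y)\,dy$, assisted by giving $b_Q$ vanishing moments up to order $\deg_y P$. But van der Corput (equivalently, non-stationary-phase integration by parts) gives nothing when the amplitude is a general $L^1$ function: $b_Q$ has no smoothness or bounded variation to integrate against, and pointwise in $x$ there is no oscillatory gain at all, since $b_Q$ can conspire with the phase for particular $x$. Extra vanishing moments of $b_Q$ only improve the non-oscillatory Taylor-expansion error in $K$, not the oscillatory term, so ``refining the decomposition or iterating'' does not touch the real obstruction. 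The only known way to extract the oscillation against rough data is to average in $x$, i.e.\ to run $TT^{\ast}$/second-moment estimates on the bad part in the style of Christ \cite{MR951506} and Chanillo--Christ \cite{MR883667}: one estimates $L^2$ quantities of the form $\lVert \sum_j T_{j+s}B_j\rVert_2^2$ by pairing with the kernel of $T^{\ast}T$, which is where the oscillatory decay actually lives --- this is exactly the engine the present paper uses in its own main argument, via Lemmas~\ref{l:same} and~\ref{l:different} and the $\langle T_K^{\ast}T_J B_{j-s}, B_{j-s}\mathbf 1_K\rangle$ computations, supplemented by the Rademacher--Menshov step for maximal truncations. As written, your endpoint argument does not close, and with it the Marcinkiewicz step you invoke for $1<p\le 2$; to repair the proposal you would need to replace the van der Corput-on-$b_Q$ step by the $TT^{\ast}$ scheme (or prove the open range of $p$ directly, as in \cite{MR890662}).
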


It is very easy to extend the $ L ^{p}$ result above to maximal truncations, but the weak-type bounds for maximal truncations are unknown.  
We address the maximal truncations at the $ L ^{1}$ endpoint, in what is the simplest possible case, namely one dimension, with the Hilbert transform kernel, and oscillatory terms that are just monomials.  
Even in this restricted setting, our results are new.  

But moreover, we use the recent language of sparse forms to provide  quantitative bounds, which immediately provide new weighted inequalities.    Call a collection of cubes $ \mathcal S$  in $ \mathbb R ^{n}$ \emph{sparse} if there 
are sets $ \{ E_S  \,:\, S\in \mathcal S\}$  
which are pairwise disjoint,   $E_S\subset  S$ and satisfy $ \lvert  E_S\rvert > \tfrac 14 \lvert  S\rvert  $ for all $ S\in \mathcal S$.
For any cube $ I$ and $ 1\leq r < \infty $, set $ \langle f \rangle_ {I,r} ^{r} = \lvert  I\rvert ^{-1} \int _{I} \lvert  f\rvert ^{r}\; dx  $.  Then the $ (r,s)$-sparse form $ \Lambda _{\mathcal S, r,s} = \Lambda _{r,s} $, indexed by the sparse collection $ \mathcal S$ is 
\begin{equation*}
\Lambda _{S, r, s} (f,g) = \sum_{I\in \mathcal S} \lvert  I\rvert \langle f  \rangle _{I,r} \langle g \rangle _{I,s}.  
\end{equation*}
Given a  sublinear operator $ T$, and $ 1\leq r, s < 1$, we set 
$ \lVert T \,:\, (r,s)\rVert$ to be the infimum over constants $ C$ so that for all  all bounded compactly supported functions $ f, g$, 
\begin{equation}\label{e:SF}
\lvert  \langle T f, g \rangle \rvert \leq C \sup  \Lambda _{r,s} (f,g), 
\end{equation}
where the supremum is over all sparse forms.  
It is essential that the sparse form be allowed to depend upon $ f $ and $ g$. But the point is that the sparse form itself varies over a class of operators with very nice properties.   

For singular integrals without oscillatory terms we have 

\begin{thm}\label{t:Tsparse} \cites{14094351,MR3625108}  Let $ K $ be a Calder\'on-Zygmund kernel on $ \mathbb R ^{n}$ as above. Then, the  operator  $ T f = \textup{p.v.} K \ast f (x)$  
satisfies $ \lVert T \,:\, (1,1)\rVert < \infty $.  
\end{thm}

The interest in this result is that the (1,1) sparse bound implies virtually all the known norm bounds for a Calder\'on-Zygmund operator on a lattice, including  weighted $ L ^{p}$ and weak-type estimates, with sharp dependence upon $ p$ and the $ A_p$ characteristic of the weight.  

Surprisingly,  a very easy proof  by Spencer and one of us  provides  sparse bounds for the polynomial case. 

\begin{thm}\label{t:rr} \cite{MR3611077}  Assume that the polynomial $ P (x,y)$ is only a function of $ y$ in \eqref{e:TK}. 
Then, for all $ 1< r < \infty $,   we have $ \lVert T_P \,:\, (r,r)\rVert < \infty $.  
\end{thm}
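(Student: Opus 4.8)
The plan is to peel $T_P$ into a Calder\'on--Zygmund piece, to which Theorem~\ref{t:Tsparse} applies directly, and an oscillatory piece on which van der Corput's lemma yields summable single-scale estimates, so that no stopping time or recursion is needed. Write $d=\deg P$. If $d\le 1$ then $e^{iP(y)}$ is a modulation times a unimodular constant, so $T_P=M\circ T\circ M^{-1}$ with $M$ the associated modulation and $Tf=\textup{p.v.}\,K\ast f$; since $\langle T_Pf,g\rangle=\langle T(M^{-1}f),M^{-1}g\rangle$ and $|M^{-1}h|=|h|$, the bound is immediate from Theorem~\ref{t:Tsparse} together with $\Lambda_{1,1}\le\Lambda_{r,r}$. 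So I assume $d\ge 2$, and (absorbing the constant term of $P$) that $P(0)=0$. Fix a Littlewood--Paley partition $1=\sum_{j\in\mathbb Z}\psi(2^{-j}y)$ of $\mathbb R^n\setminus\{0\}$ with $\psi$ supported where $|y|$ is comparable to $1$, put $k_j=e^{iP}K\,\psi(2^{-j}\cdot)$ and $E_jf=k_j\ast f$, so $T_P=\sum_jE_j$, each $k_j$ supported where $|y|$ is comparable to $2^j$ with $\lVert k_j\rVert_\infty\lesssim 2^{-jn}$ and $\lVert k_j\rVert_1\lesssim 1$.

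For the small scales, the operator $T_{\mathrm{small}}:=\sum_{j\le 0}E_j$ is convolution against $e^{iP}K\,\eta(|\cdot|)$ with $\eta$ a smooth cutoff to $|y|\lesssim 1$; this is a Calder\'on--Zygmund kernel, because its size is $\lesssim |y|^{-n}$, its gradient is $\lesssim |y|^{-n-1}$ on $|y|\lesssim 1$ (there the offending term $|\nabla P|\,|K|$ is harmless since $|\nabla P|=O(1)$), and it carries the needed cancellation. Theorem~\ref{t:Tsparse} then gives $\lVert T_{\mathrm{small}}\,:\,(1,1)\rVert<\infty$, hence $\lVert T_{\mathrm{small}}\,:\,(r,r)\rVert<\infty$ for all $r>1$ since $\Lambda_{1,1}\le\Lambda_{r,r}$.

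For the large scales I would establish two single-scale estimates for $E_j$ and interpolate. Using only the size and support of $k_j$ one gets the pointwise bound $|E_jf|\lesssim Mf$, and since $M$ itself obeys a $(1,1)$ sparse bound, $\lVert E_j\,:\,(1,1)\rVert\lesssim 1$ uniformly in $j$. The oscillation enters through van der Corput's lemma applied to $\widehat{k_j}(\xi)=\int e^{i(P(y)-y\cdot\xi)}K(y)\psi(2^{-j}y)\,dy$: for all $j$ large enough (depending only on $P$) one obtains $\lVert E_j\rVert_{L^2\to L^2}=\lVert\widehat{k_j}\rVert_\infty\lesssim 2^{-\delta j}$ for some $\delta=\delta(P)>0$ --- for the monomial $P(y)=y^d$ on $\mathbb R$ one may take $\delta=d/2$ and every $j\ge 0$. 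A routine single-scale argument --- partition $\mathbb R^n$ into cubes of side comparable to $2^j$, use that $E_jf$ on such a cube $Q$ only sees $f$ on a fixed dilate $D_Q$ (and that the dilated partition is sparse), and Hölder --- shows that a bound $\lVert E_j\rVert_{L^r\to L^{r'}}\le A$ (with $1<r<2$ and $r'$ the conjugate) yields $\lVert E_j\,:\,(r,r)\rVert\lesssim_n A\,(2^j)^{n(2/r-1)}$. Now interpolate $\lVert E_j\rVert_{L^1\to L^\infty}\lesssim 2^{-jn}$ against $\lVert E_j\rVert_{L^2\to L^2}\lesssim 2^{-\delta j}$ to get $\lVert E_j\rVert_{L^r\to L^{r'}}\lesssim 2^{-jn(2/r-1)}\,2^{-\delta j(2/r')}$; the factor $(2^j)^{n(2/r-1)}$ lost in the single-scale step cancels the first factor exactly, leaving $\lVert E_j\,:\,(r,r)\rVert\lesssim 2^{-(2\delta/r')j}$. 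Summing over $j\ge 1$ (the finitely many exceptional small $j$ contributing boundedly via the trivial estimate) gives $\lVert T_{\mathrm{large}}\,:\,(r,r)\rVert<\infty$ for $1<r<2$. Adding the two pieces yields $\lVert T_P\,:\,(r,r)\rVert<\infty$ for $1<r<2$, and the range $r\ge 2$ is then free because $\Lambda_{r_0,r_0}\le\Lambda_{r,r}$ whenever $r_0\le r$.

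The main obstacle is the van der Corput estimate for a general polynomial phase: in one variable with $P$ a monomial it is the classical lemma in one line, but for an arbitrary polynomial $P(y)$ on $\mathbb R^n$ one must exhibit a uniform lower bound for some derivative of $y\mapsto P(y)-y\cdot\xi$ on each dyadic annulus (and treat the degenerate affine case, done above, separately) --- this is precisely the oscillatory-integral input behind the Ricci--Stein theory. Everything else --- the Calder\'on--Zygmund reduction, the pointwise domination by $M$, and the single-scale-to-sparse bookkeeping --- is elementary, which is why the proof is short; note in particular that this plan invokes Theorem~\ref{t:Tsparse} but not the deeper $L^p$ theory of Theorem~\ref{t:fixedPoly}.
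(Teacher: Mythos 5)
The paper does not prove Theorem~\ref{t:rr} itself --- it cites Lacey--Spencer \cite{MR3611077} --- and your argument is essentially that proof: the small scales are absorbed into a Calder\'on--Zygmund operator handled by Theorem~\ref{t:Tsparse}, while at each large scale the trivial $L^1 \to L^\infty$ bound is interpolated against the van der Corput $L^2$ decay and converted into an $(r,r)$ sparse estimate with geometric decay in the scale, which is then summed. The one ingredient you quote rather than prove --- the $2^{-\delta j}$ bound for $\sup_\xi \lvert \widehat{k_j}(\xi)\rvert$ for a general polynomial phase, uniform after discarding the linear part --- is indeed the standard Stein--Wainger/Ricci--Stein oscillatory estimate, so the proposal is correct and follows the same route as the cited proof.
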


This result is strong enough to deduce $ A_p $ weighted inequalities, for all $ 1< p < \infty $, and trivally extends to maximal truncations.  Compare to \cite{MR2900003}. 

We prove this sparse bound: A  $ (1, r)$ bound for maximal truncations of an oscillatory Hilbert transform.  (The case of degree one is excluded below, since it falls within the scope of Theorem~\ref{t:Tsparse}.) 

\begin{thm}\label{t:monomial} Let $ d \geq 2$, and define 
\begin{align}  
H _{\ast, d }  f & := \sup _{\epsilon >0} 
\Bigl\lvert 
\int _{\lvert  y\rvert > \epsilon  }  f (x-y) \frac { e ^{2 \pi i  y ^d }} y\; dy 
\Bigr\rvert
\end{align}
This  operator satisfy a $ (1,r)$ bounds, for $ 1< r \leq 2$.  Namely, 
\begin{equation}  \label{e:H<S}
\lVert H_{ \ast ,d} \,:\,  (1,r)\rVert \lesssim \frac {1} {(r-1) }. 
\end{equation}
\end{thm}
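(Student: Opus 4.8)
The proof will proceed by a Calderón–Zygmund–type decomposition tailored to the sparse form, exploiting the scaling structure of the monomial phase $y^d$. The key geometric feature is that at scale $\lvert y\rvert \sim 2^j$, the phase $e^{2\pi i y^d}$ oscillates nontrivially precisely when $2^{jd}\gtrsim 1$, i.e. $j \gtrsim 0$; for $j$ negative the operator is essentially the truncated Hilbert transform, while for $j$ large the oscillation provides decay. So I would first split the kernel $K_d(y) = e^{2\pi i y^d}/y \cdot \mathbf 1_{\lvert y\rvert>\epsilon}$ dyadically into pieces $K_j$ supported on $\lvert y\rvert \sim 2^j$, and correspondingly write $H_{\ast,d} \leq H^{\mathrm{loc}} + H^{\mathrm{osc}}$ where $H^{\mathrm{loc}}$ collects the scales $2^j \lesssim 1$ (a truncated Hilbert transform, handled by Theorem~\ref{t:Tsparse}, giving a $(1,1)$ bound) and $H^{\mathrm{osc}}$ collects $2^j \gtrsim 1$.

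For the oscillatory part, the engine is a single-scale $L^2$ estimate of van der Corput type: the operator with kernel $K_j$ maps $L^2\to L^2$ with a gain, $\lVert f \mapsto K_j \ast f\rVert_{2\to 2} \lesssim 2^{-j d \delta}$ for some $\delta>0$ (by stationary phase / integration by parts in the frequency variable, exactly the mechanism behind the Ricci–Stein $L^p$ theory). Simultaneously one has the trivial $L^1\to L^{1,\infty}$ or $L^1\to L^1$-type bound for each $K_j$ of size $O(1)$. The standard route to a sparse bound is then to run the recursive stopping-time argument: fix a cube $Q$, subtract the local averages, and on the complement of the exceptional set where $\langle \lvert f\rvert\rangle_{Q'}$ is large, control the operator by the good $L^2$ estimate against $\langle g\rangle_{Q,r'}$ after interpolating the $L^2$-decay against the $L^1$ bound to land at an $L^r$ estimate with a summable-in-$j$ gain; the bad part is absorbed into the next generation of stopping cubes, producing the sparse collection. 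The maximal truncation is incorporated by a now-routine argument (Cotlar-type inequality, or directly building $\sup_\epsilon$ into the single-scale maximal estimate, since truncating only removes scales).

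The main obstacle — and the reason the theorem is nontrivial — is the interaction between the \emph{maximal} truncation and the \emph{sparse} (rather than merely $L^p$) conclusion at the $L^1$ endpoint: one cannot simply sum the single-scale sparse bounds, because the number of oscillatory scales contributing to a given cube of size $\sim 2^k$ is unbounded, and the naive $L^1$ bounds for each $K_j$ are $O(1)$, not summable. The resolution must quantify the decay: the crucial interpolation is between the $2^{-jd\delta}$ decay on $L^2$ and the $O(1)$ bound on $L^1$, which yields an $L^r\to L^r$ (hence $(r,r)$ sparse) decay of the form $2^{-jd\delta(r)}$ with $\delta(r)\sim(r-1)$, and this is exactly where the $\frac{1}{r-1}$ blow-up in \eqref{e:H<S} originates upon summing the geometric series and optimizing the stopping-time thresholds. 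Making this quantitative bookkeeping precise — tracking how the loss of a power of $(r-1)$ in the decay rate combines with the loss in passing from a $(r,r)$ to a $(1,r)$ sparse bound via the restricted-type / stopping-cube mechanism — is the technical heart of the argument; everything else is a packaging of known single-scale estimates and the by-now-standard sparse recursion.
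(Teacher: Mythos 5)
There is a genuine gap, and it sits exactly where you declare the argument to be ``bookkeeping.'' Your engine is: single-scale $L^2$ decay from van der Corput, interpolated against a trivial $O(1)$ single-scale $L^1$ bound, fed into a ``standard'' stopping-time recursion in which the bad part is ``absorbed into the next generation of stopping cubes.'' That scheme is precisely the Lacey--Spencer argument and it only produces the $(r,r)$ sparse bound of Theorem~\ref{t:rr}; it cannot be upgraded to a $(1,r)$ bound by optimizing thresholds or tracking constants. The reason is that when you perform a Calder\'on--Zygmund decomposition of $f$ on a stopping cube, the bad function has no usable cancellation against the oscillatory kernel (subtracting means buys nothing once $e^{2\pi i y^{d}}$ is present), and the scales of the operator that are \emph{larger} than a bad interval spread its contribution well outside the exceptional set, so it cannot simply be pushed into the next generation. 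Handling this is the entire content of the Chanillo--Christ weak $(1,1)$ theorem, and in the paper it is done by $TT^{\ast}$ computations (Lemmas~\ref{l:same} and~\ref{l:different}), a splitting of the intervals according to which term of the $TT^{\ast}$ kernel dominates (the ``standard'' versus ``non-standard'' collections around \eqref{e:non}), and an $L^2$ estimate for the bad part with geometric decay in the scale separation $s$ (Lemma~\ref{l:3prove}). None of this is supplied, or even replaceable, by the interpolation-plus-recursion you describe; the quantity $\delta(r)\sim (r-1)$ you invoke is the mechanism for $(r,r)$, not for the $L^1$ normalization of $f$.

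The second gap is the maximal truncation. You call it ``now-routine (Cotlar-type inequality, or directly building $\sup_\epsilon$ into the single-scale estimate),'' but a Cotlar inequality costs a maximal function of the operator or of $f$, which is harmless for $L^p$ bounds yet does not survive the endpoint analysis of the bad part --- indeed the paper emphasizes that the weak-type bound for maximal truncations was open even unweighted, so no routine device can close it. The paper's actual mechanism is new: a Carleson measure/packing estimate \eqref{e:CM} for the non-standard intervals, a John--Nirenberg exceptional set, a generational decomposition, and the abstract Rademacher--Menshov inequality (Lemma~\ref{l:RM}), which converts the Bessel-type $TT^{\ast}$ orthogonality \eqref{e:N3} into a bound for the maximal truncations at the cost of a logarithm that is absorbed by the $2^{-s}$ decay. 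Your outer reductions (splitting off scales $\lesssim 1$ via Theorem~\ref{t:Tsparse}, dyadic localization, sparse recursion from a local estimate) do match the paper, but the two ingredients above are the theorem's actual content and are missing from the proposal.
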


The  sparse forms are positive, and highly localized, making their properties on weighted spaces very easy to analyze.  
We have as an easy consequence a range of quantitative weighted inequalities for  $ H _{\ast ,d}$, phrased in the language of Muckenhoupt $A_p$ weights. 

\begin{corollary}\label{c:wtd} For every  $ d\geq 2$ and weight $ w\in A_1$ there holds 
\begin{gather*}
\lVert  H _{\ast, d  }  \,:\, L ^{1} (w) \mapsto L ^{1, \infty } (w)\rVert \lesssim [w] _{A_1} ^{2} \log_+ [w] _{A_1}, 
\\
\lVert  H _{\ast, d  }  \,:\, L ^{p} (w) \mapsto L ^{p}  (w)\rVert \lesssim [w] _{A_p}  ^{ \max \{\frac 2 {p-1}, \frac p {p-1}\}}, \qquad 1< p < \infty . 
\end{gather*}
\end{corollary}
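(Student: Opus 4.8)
The plan is to deduce Corollary~\ref{c:wtd} from the sparse bound \eqref{e:H<S} via the now-standard correspondence between sparse forms and quantitative weighted inequalities, the one wrinkle being that Theorem~\ref{t:monomial} supplies not a single sparse bound but a one-parameter family, one for each $r\in(1,2]$, with constant $\lesssim (r-1)^{-1}$. Thus for a given weight one runs the usual argument and then \emph{chooses $r$} as a function of the weight characteristic, balancing the blow-up $(r-1)^{-1}$ of the sparse constant against the gain afforded by the reverse-H\"older self-improvement of $A_\infty$ weights.

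\emph{Strong-type bounds.} Fix $1<p<\infty$, $w\in A_p$, and set $\sigma=w^{1-p'}$. By duality it suffices to estimate $\int (H_{\ast,d}f)\,h$ with $0\le h$ and $\|h\|_{L^{p'}(\sigma)}\le1$. Apply \eqref{e:H<S} with a parameter $r\in(1,\min\{2,p'\})$, and then the factorization $1=w^{1/p}\sigma^{1/p'}$ and H\"older's inequality inside each cube, to bound the positive form:
\begin{equation*}
\frac{1}{r-1}\sum_{I\in\mathcal S}\lvert I\rvert\,\langle f\rangle_{I,1}\,\langle h\rangle_{I,r}\;\lesssim\;\frac{1}{r-1}\sum_{I\in\mathcal S}\lvert I\rvert\,\langle\lvert f\rvert^{p}w\rangle_{I,1}^{1/p}\,\langle\lvert h\rvert^{p'}\sigma\rangle_{I,1}^{1/p'}\cdot\langle\sigma\rangle_{I,1}^{1/p'}\langle w^{\beta}\rangle_{I,1}^{1/(\beta p)},
\end{equation*}
where $\beta=\beta(p,r)>1$ with $\beta-1\sim p(r-1)$ as $r\to1^{+}$. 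Now choose $r$ so that $\beta-1\le c/[w]_{A_\infty}$, the threshold below which $w\in A_\infty$ obeys $\langle w^{\beta}\rangle_{I,1}^{1/\beta}\lesssim\langle w\rangle_{I,1}$; this amounts to $r-1\sim(p[w]_{A_\infty})^{-1}$, so $(r-1)^{-1}\lesssim p[w]_{A_\infty}\le p[w]_{A_p}$. After this the trailing two factors collapse to $\langle\sigma\rangle_{I,1}^{1/p'}\langle w\rangle_{I,1}^{1/p}\le[w]_{A_p}^{1/p}$, and what is left is an honest $(1,1)$-sparse form in $\lvert f\rvert^{p}w$ and $\lvert h\rvert^{p'}\sigma$; this is controlled by the sharp weighted theory for $(1,1)$-sparse forms (the $A_2$-theorem mechanism; cf.\ Theorem~\ref{t:Tsparse}) with a factor $[w]_{A_p}^{\max\{1,1/(p-1)\}}\,\|f\|_{L^{p}(w)}\|h\|_{L^{p'}(\sigma)}$. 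Collecting the gains $(r-1)^{-1}$, $[w]_{A_p}^{1/p}$ and $[w]_{A_p}^{\max\{1,1/(p-1)\}}$, and organizing the optimization in $r$ uniformly via sharp Rubio de Francia extrapolation from the resulting bound, gives the stated power $\max\{2/(p-1),p/(p-1)\}$ of $[w]_{A_p}$.

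\emph{Weak-type bound.} Let $w\in A_1$. Perform a Calder\'on--Zygmund decomposition of $f$ at height $\lambda$, $f=g+b$ with $b=\sum_j b_j$, each $b_j$ of mean zero and supported on a cube $Q_j$. The exceptional set $\Omega=\bigcup_j 2Q_j$ satisfies $w(\Omega)\lesssim[w]_{A_1}\sum_j w(Q_j)\lesssim[w]_{A_1}^{2}\lambda^{-1}\|f\|_{L^1(w)}$ by the weak $(1,1)$ bound of $M$ on $L^1(w)$. Off $\Omega$, the contribution of $g$ is estimated by Chebyshev together with the $L^{s}(w)$ bound above for a fixed $s\in(1,2)$, and that of $b$ is estimated by testing \eqref{e:H<S} against $w\mathbf 1_{\Omega^{c}}$, bounding $\langle w\mathbf 1_{\Omega^{c}}\rangle_{I,r}$ by the $A_1$ reverse-H\"older inequality (which again forces $r-1\sim[w]_{A_1}^{-1}$) and summing the resulting Carleson series using the cancellation of the $b_j$. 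One power of $[w]_{A_1}$ comes from $w(\Omega)$, a second from the sparse constant $(r-1)^{-1}$, and the factor $\log_+[w]_{A_1}$ from the endpoint degradation of the $L^{s}(w)$ bound on $g$ when $s$ is taken as a function of $[w]_{A_1}$; this gives $\|H_{\ast,d}\|_{L^1(w)\to L^{1,\infty}(w)}\lesssim[w]_{A_1}^{2}\log_+[w]_{A_1}$.

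The step I expect to be the main obstacle is the weak-type extraction: a $(1,r)$-sparse bound with $r>1$ is genuinely weaker than a $(1,1)$ bound --- which is \emph{not} available for $H_{\ast,d}$ --- so distilling from it a weighted weak-type inequality with the \emph{sharp} logarithmic correction requires the Calder\'on--Zygmund argument above run with $r$ tuned to $[w]_{A_1}$, and is the analogue for $H_{\ast,d}$ of the (still open) $[w]_{A_1}\log(e+[w]_{A_1})$ bound for Calder\'on--Zygmund operators. By comparison, the strong-type estimates are essentially bookkeeping once the family of sparse bounds of Theorem~\ref{t:monomial} is in hand.
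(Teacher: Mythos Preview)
The paper does not prove this corollary; it simply records that once the family of $(1,r)$ sparse bounds \eqref{e:H<S} is in hand, the two weighted inequalities are read off from the literature: \cite{MR3531367}*{\S6} or \cite{CCPO}*{Cor.~A.1} for the strong-type bound, and \cite{170105170}*{Thm.~1.11} for the weighted weak-type. Your proposal attempts to reconstruct those arguments, and the overall philosophy --- use reverse H\"older to trade the $A_\infty$ self-improvement of the weight against the blow-up $(r-1)^{-1}$, and optimise in $r$ --- is exactly right. But the execution has real gaps.

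\textbf{Strong-type.} Your reduction to ``an honest $(1,1)$-sparse form in $\lvert f\rvert^{p}w$ and $\lvert h\rvert^{p'}\sigma$'' with a subsequent factor $[w]_{A_p}^{\max\{1,1/(p-1)\}}$ does not hold as written: after the H\"older factorisation you have $\sum_I \lvert I\rvert\,\langle \lvert f\rvert^{p}w\rangle_I^{1/p}\langle \lvert h\rvert^{p'}\sigma\rangle_I^{1/p'}$, which is a Carleson embedding problem, not a $(1,1)$-sparse form to which the $A_2$-theorem mechanism applies. If one tallies the powers you list --- one from $(r-1)^{-1}\lesssim [w]_{A_\infty}$, one of $1/p$, and one of $\max\{1,1/(p-1)\}$ --- the total exceeds the claimed $\max\{2/(p-1),p/(p-1)\}$ for $p>2$. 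The sharper exponent genuinely requires the two-weight Carleson embedding with the mixed $A_p$--$A_\infty$ constants as in \cite{MR3531367}, together with the optimisation of $r$ against $[\sigma]_{A_\infty}$ rather than $[w]_{A_\infty}$; your sketch does not reach this.

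\textbf{Weak-type.} Here the approach is different from the cited one and, as written, does not close. The key problem is the sentence ``summing the resulting Carleson series using the cancellation of the $b_j$.'' The sparse bound \eqref{e:H<S} controls $\langle H_{\ast,d} b,\,w\mathbf 1_{\Omega^{c}}\rangle$ by $\sum_I \lvert I\rvert\,\langle \lvert b\rvert\rangle_{I}\,\langle w\mathbf 1_{\Omega^{c}}\rangle_{I,r}$, and the averages $\langle \lvert b\rvert\rangle_{I}$ see only $\lvert b\rvert$: the mean-zero property of the $b_j$ is invisible to this form and cannot be invoked. After reverse H\"older one is left with $\sum_j \lVert b_j\rVert_1\sum_{I\supset Q_j}\langle w\rangle_I$, and for a sparse chain of $I$'s above a fixed $Q_j$ there is no a priori bound on the number of terms, so the inner sum does not converge with the constants you need. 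The argument in \cite{170105170} avoids any Calder\'on--Zygmund decomposition of $f$ and instead bounds the weighted weak-type norm of the $(1,r)$-sparse \emph{operator} directly, then optimises $r-1\sim [w]_{A_\infty}^{-1}$; this is what produces the second power of $[w]_{A_1}$ and the logarithm. Your final paragraph also conflates two different things: the stated bound $[w]_{A_1}^{2}\log_+[w]_{A_1}$ is a \emph{proved} upper bound here, not an analogue of the (open) $A_1$ conjecture for Calder\'on--Zygmund operators.
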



For the second bound, see \cite{MR3531367}*{\S 6}, or the proof of \cite{CCPO}*{Cor. A.1}.  
The quantitative bound in $L^p(w)$ is new, with prior work \cites{MR1782909,MR2900003} being rather complicated, while also addressing more general situations.  The weak type bound can be found in \cite{170105170}*{Thm 1.11}. 
The weak-type estimate is new for maximal truncations  \emph{even in the unweighted case.}   
Without maximal truncations, in the unweighted case,  the weak $ L ^{1}$ is a well known result of Chanillo and Christ \cite{MR883667} from 1987.  
The recent work of \cite{CCPO} includes techniques powerful enough to prove the sparse bound \eqref{e:H<S} 
for $ H_d$.  (Follow the lines of their paper for their Theorem C, a sparse bound for  the Bochner-Riesz multiplier at critical index.) But that paper does not address maximal truncations.

\smallskip

Sparse bounds for operators arose from the weighted theory, particularly motivated by the work of Andrei Lerner \cites{MR2721744,MR3085756}. 
The bilinear form estimate was proved first by Cond\'e and Rey \cite{14094351}, with the subsequent proof of one of us \cite{MR3625108} having several interesting extensions, see for instance \cites{MR3531367,160506401}. 
This paper is strongly motivated by the multilinear approach of Culiuc, Ou and Di Plinio \cite{160305317},  the `rough singular integral' paper of Cond\'e, Culiuc, Di Plinio, and Ou \cite{CCPO}, and a paper by  Spencer and one of us \cite{MR3611077}.  This last paper conjectured therein that a $ (1,r)$ bound held in the generality of Theorem~\ref{t:fixedPoly}.   This paper validates that conjecture, and indicates that a significantly stronger result is true.

\medskip 
As we mentioned, the recent work \cite{CCPO} of Cond\'e,   Culiuc,  Di Plinio and  Ou, proves the sparse bound above without maximal truncations. 
This paper supplements their analysis with a technique to control maximal truncations.  

\begin{inparaenum}
\item Following a standard technique in the subject \cites{MR883667,MR1879821,MR822187}, the proof is based upon certain $ T  T ^{\ast} $ calculations.  The latter are summarized in Lemma~\ref{l:same}, and are quite simple in the monomial case.   

\item  The operator in question has to be `localized', and the main estimate is Lemma~\ref{l:sparse}, which shows that if the operator acts on that part of a function that has bounded averages, then the operator is highly integrable, against functions which have controlled averages.   
The deduction of the theorem from this statement is a standard recursion.

\item The proof of the crucial lemma uses the methods of Christ \cites{MR951506,MR796439} and Chanillo and Christ \cite{MR883667}. 
And, as we indicated, one could use the general procedure of \cite{CCPO} to complete the $(1,r)$ bound, but without maximal truncations.  
We introduce  one more technique,  based around a Carleson measure estimate in \eqref{e:CM}, with an  abstract Rademacher-Menshov theorem 
to control  maximal truncations.  
\end{inparaenum}

\medskip 
\textbf{Acknowledgment.}
We thank Jos\'e Cond\'e, Amalia Culiuc,  Francesco Di Plinio and Yumeng Ou for discussions, and sharing their paper \cite{CCPO}.  

\section{Lemmas} \label{s:lemmas}

There are two categories of facts collected here,  
\begin{inparaenum}
\item those which reflect the oscillatory nature of the problem, 
\item and,  a variant of the Rademacher-Menshov theorem, which will control the maximal truncations. 
\end{inparaenum}

\subsection{Notation}\label{not}
Henceforth, we use $e(t) := e^{2\pi i t}$; $M_{HL}$ denotes the Hardy-Littlewood maximal function.

Let $|\rho| \lesssim 1_{|t| \approx 1}$ be an odd compactly supported Schwartz function that resolves the singularity $\frac{1}{t}$ in that
\begin{equation}\label{e:rho}
 \sum_i \rho_i(t) := \sum_i 2^{-i} \rho(2^{-i}t) = \frac{1}{t}, \qquad  t\neq 0.
\end{equation}
Let
\begin{equation}\label{e:psi}
\rho ^{+}_i(t) := \rho_i (t) \mathbf{1}_{t > 0};
\end{equation}
The oscillatory part of the argument concentrates on  $\{ \rho ^{+}_i \}$, with the understanding that symmetric arguments can be used to treat $\{ \rho_i - \rho^+_i \}$.

We will make use of the modified Vinogradov notation. We use $X \lesssim Y$, or $Y \gtrsim X$ to denote the estimate $X \leq CY$ for an absolute constant $C$. 
We use $X \approx Y$ as
shorthand for $Y \lesssim X \lesssim Y $.

\subsection{Oscillatory Estimates}
We will be concerned with operators that are convolution with respect to $ \psi _k (y) :=  e (y ^{d} ) \rho  ^{+} _k (y)$ for $ k \in \mathbb N $.  
These next two lemmas are essential facts about these operators.  

\begin{lemma}\label{l:same}  There is  a choice of $ k_0 >0$  so that for all  $ k \in \mathbb N $, with $ k > k_0$, we have 
\begin{equation}\label{e:same}
\lvert  \tilde \psi_k \ast \psi_k (x) \rvert 
\lesssim 2 ^{-k} \mathbf 1_{ [- 1/2 , 1/2]} (x)+      2 ^{-2k}  \mathbf 1_{ [-2 ^{k+1} , 2 ^{k+1})}(x).  
\end{equation}
Above, $ \tilde \phi (y) = \overline  \phi (-y)$. 
\end{lemma}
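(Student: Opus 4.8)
The plan is to compute $\tilde\psi_k * \psi_k$ directly and extract the decay from stationary/non-stationary phase. Write out the convolution as
\begin{equation*}
\tilde\psi_k * \psi_k (x) = \int \overline{\psi_k(y-x)}\,\psi_k(y)\; dy
= \int e\bigl(y^d - (y-x)^d\bigr)\,\rho^+_k(y-x)\,\rho^+_k(y)\; dy.
\end{equation*}
Because $\rho$ is supported where $|t|\approx 1$, the amplitude $\rho^+_k(y-x)\rho^+_k(y)$ is supported on $y\approx 2^k$ and $y-x\approx 2^k$; in particular the integral vanishes unless $|x|\lesssim 2^{k+1}$, which already gives the support constraint in the second term of \eqref{e:same}, and the trivial bound $|\tilde\psi_k*\psi_k(x)|\lesssim 2^{-2k}\cdot 2^k = 2^{-k}$ there. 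So the content of the lemma is the improvement to $2^{-k}$ times a function supported near the origin \emph{plus} $2^{-2k}$ on the long interval — i.e. we must gain an extra factor $2^{-k}$ when $|x|\gtrsim 1$.

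The gain comes from the phase $\Phi(y) = y^d - (y-x)^d$. First I would compute $\Phi'(y) = d\,y^{d-1} - d\,(y-x)^{d-1}$. On the support $y\approx 2^k$, by the mean value theorem $\Phi'(y) = d(d-1)\,\xi^{d-2} x$ for some $\xi\approx 2^k$, so $|\Phi'(y)|\approx 2^{k(d-2)}|x|$ when $d\ge 2$ (for $d=2$ this reads $|\Phi'|\approx|x|$, a constant in $y$; for $d\ge3$ it is comparably sized across the support since $y$ ranges over a single dyadic block). Thus whenever $|x|\ge 1$ we have non-stationary phase: integrating by parts $N$ times against $\mathrm d/\mathrm dy$, each step costs a derivative landing on the amplitude (size $2^{-k}$ in sup-norm, with each further $y$-derivative costing another $2^{-k}$) or on $1/\Phi'$, and produces a factor $\sim (2^{k(d-2)}|x|)^{-1}$. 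After enough integrations by parts — together with the fact that the second derivative $\Phi''$ and higher derivatives of $\Phi$ are controlled on the dyadic block — one obtains
\begin{equation*}
|\tilde\psi_k * \psi_k (x)| \lesssim_N 2^{-k}\,(2^{k(d-2)}|x|)^{-N}, \qquad 1\le |x|\lesssim 2^{k+1}.
\end{equation*}
Taking $N$ large (say $N=2$ suffices when $d\ge 3$; for $d=2$ one uses that $|x|$ itself is the large parameter and $N$ can be taken as large as needed), and noting $2^{-k}(2^{k(d-2)}|x|)^{-N}\lesssim 2^{-2k}$ once $|x|\ge 2^{-k(d-2)}2^{k/N}$, I would split the range $1\le|x|\lesssim 2^{k+1}$ and absorb everything into the stated $2^{-2k}\mathbf 1_{[-2^{k+1},2^{k+1})}$ term. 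The region $|x|\le 1$ is handled by the trivial bound $|\tilde\psi_k*\psi_k(x)|\lesssim\|\rho^+_k\|_\infty^2\cdot(\text{length}\approx 2^k)=2^{-k}$, giving the first term $2^{-k}\mathbf 1_{[-1/2,1/2]}(x)$ (the constants $1/2$, $2^{k+1}$ being harmless and adjustable by the choice of $k_0$ and the normalization of $\rho$).

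The main obstacle I anticipate is handling small $d$ uniformly, in particular $d=2$: there $\Phi(y)=y^2-(y-x)^2 = 2xy - x^2$ is \emph{linear} in $y$, so $\Phi'=2x$ is constant and there is no stationary point ever once $x\neq0$ — this is actually the easy case, pure non-stationary phase with a clean exponential-sum–type evaluation. The subtler point is that for $d\ge 3$ the phase $\Phi$ could in principle have a critical point $y_0$ with $\Phi'(y_0)=0$, namely where $y_0^{d-1}=(y_0-x)^{d-1}$; for real $y$ and $x>0$ this forces $y_0 = y_0-x$, impossible, so there is genuinely no stationary point on the support and the non-stationary-phase bound above is legitimate — but I would need to verify this sign analysis carefully (using that on $\rho^+_k$ both $y>0$ and $y-x$ have a fixed relation), and check the lower bound $|\Phi'(y)|\gtrsim 2^{k(d-2)}|x|$ is uniform over the dyadic support rather than degenerating at an endpoint. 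Once that lower bound is secured, the integration-by-parts bookkeeping is routine.
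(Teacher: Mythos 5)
Your overall strategy is the same as the paper's: trivial bound $2^{-2k}\cdot 2^k=2^{-k}$ near the origin, support considerations giving $|x|\lesssim 2^{k+1}$, and non-stationary phase for $|x|\gtrsim 1/2$ based on the lower bound $|\Phi'(y)|\approx 2^{k(d-2)}|x|$ (with the observation that for $d=2$ the phase is linear in $y$). However, there is a quantitative slip that, as written, breaks the case $d=2$ — precisely the case the paper has to treat separately. Your displayed integration-by-parts bound $2^{-k}(2^{k(d-2)}|x|)^{-N}$ undercounts the gain: each integration by parts contributes the derivative cost $2^{-k}$ on the amplitude \emph{divided} by $|\Phi'|\approx 2^{k(d-2)}|x|$, i.e.\ a factor $\approx (2^{k(d-1)}|x|)^{-1}$ per step, not $(2^{k(d-2)}|x|)^{-1}$. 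With your weaker bound, for $d=2$ it reads $2^{-k}|x|^{-N}$, which at $|x|\approx 1$ is only $2^{-k}$, not the required $2^{-2k}$, and your proposed remedy (``take $N$ as large as needed'') does not help there: no power of $|x|^{-1}$ gains anything on the range $1/2\le |x|\lesssim 2^{k/N}$. The fix stays inside your own argument: with the correct per-step factor, a single integration by parts already gives $2^{-k}\cdot 2^{-k}|x|^{-1}\lesssim 2^{-2k}$ for $d=2$ and $|x|\ge 1/2$; this is exactly the paper's observation that for $d=2$ the integral is the Fourier transform of the bump $\rho^+_k(x+\cdot)\rho^+_k(\cdot)$ (height $2^{-2k}$, width $2^k$) evaluated at frequency of size $\ge 1$.

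A second, smaller point: you cover $|x|\le 1$ only by the trivial bound $2^{-k}$, but the first term in \eqref{e:same} is supported on $[-1/2,1/2]$, so on $1/2\le|x|\le 1$ you still owe the bound $2^{-2k}$. You cannot simply ``adjust the constant'' inside the lemma as stated; instead, note that your phase-derivative lower bound $|\Phi'|\gtrsim 2^{k(d-2)}|x|$ is already valid for all $|x|\ge 1/2$ (this is exactly the threshold the paper uses), so the corrected integration-by-parts estimate covers $1/2\le|x|\le 1$ with no extra work. With these two repairs your proof coincides with the paper's.
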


\begin{proof}
The convolution is explicitly 
\begin{equation*}
  \tilde \psi_k \ast \psi _k (x) = 
  \int e (  (x+y)^d - y ^{d} )  \rho ^{+} _k (x+y) \rho ^{+}_k (y) \; dy .
\end{equation*}
For $|x| \leq 1/2$, we use the trivial bound on the integral of $2^{-k}$, so we consider the remaining case, when $1/2 \leq |x| \leq 2^{k+1}$.

We first address the case of $ d \geq 3$.   The derivative of the phase is
\[ \lvert  d(x+y)^{d-1} - d y^{d-1}\rvert \simeq 
\lvert  x 2 ^{k (d-2)}\rvert 
\gtrsim  2^{k(d-2)}, \qquad  \lvert  x\rvert\geq 1/2 .\]
A simple integration by parts argument allows us to estimate to conclude the estimate. 

For the case of $ d=2$, we should bound the integral 
\begin{equation*}
\int e (2xy)\rho ^{+} _k (x+y) \rho ^{+}_k (y) \; dy  . 
\end{equation*}
This is the Fourier transform of the Schwartz function $ \rho ^{+} _k (x+ \cdot ) \rho ^{+}_k ( \cdot ) $, evaluated at $ \lvert  x\rvert \geq 1/2 $.  The latter function has spatial scale $ 2 ^{k}$,  so the Fourier decay is on scale $ 2 ^{-k}$, and the bound follows.

\end{proof}

We now prove the orthogonality statement. 

\begin{lemma}\label{l:different}  For  an absolute constant $ k_0 >0$, and 
$ j, k \in \mathbb N $, with $ 1\leq j < k - k_0   $, we have 
\begin{equation}\label{e:different}
\lvert  \tilde \psi_j \ast \psi _k(x) \rvert 
\lesssim  2 ^{-2k   }    \mathbf 1_{ [-2 ^{k+1} , 2 ^{k+1})}.
\end{equation}

\end{lemma}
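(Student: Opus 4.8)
The plan is to estimate the convolution
\[
\tilde\psi_j \ast \psi_k(x) = \int e\bigl((x+y)^d - y^d\bigr)\, \rho^+_j(x+y)\, \rho^+_k(y)\; dy
\]
by exploiting the separation of scales $j < k - k_0$. First I would record the support constraints: $\rho^+_k(y)$ forces $|y| \approx 2^k$, and $\rho^+_j(x+y)$ forces $|x+y| \approx 2^j$, hence $|x| \approx 2^k$ as well, which already explains the indicator $\mathbf 1_{[-2^{k+1}, 2^{k+1})}$ on the right side of \eqref{e:different}. So the whole game is to produce the gain $2^{-2k}$ from the integral over $y$ in a window of length $\approx 2^j$.

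Next I would differentiate the phase $\phi(y) = (x+y)^d - y^d$ in $y$. We get $\phi'(y) = d\bigl((x+y)^{d-1} - y^{d-1}\bigr)$. On the support, $|y| \approx 2^k$ while $|x+y| \approx 2^j \ll 2^k$, so the term $y^{d-1}$ dominates and $|\phi'(y)| \approx 2^{k(d-1)}$. More generally, repeated differentiation only improves things: $|\phi^{(m)}(y)| \lesssim 2^{k(d-m)}$ for $m \le d-1$, and the higher derivatives of the cutoff functions are controlled by their spatial scales ($\rho^+_j$ contributing $2^{-j}$ per derivative, $\rho^+_k$ contributing $2^{-k}$). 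The plan is then a standard non-stationary-phase / iterated integration by parts: each integration by parts divides by $|\phi'| \approx 2^{k(d-1)}$ and differentiates the amplitude, costing at most $2^{-j}$ (the worst scale) plus lower-order terms from differentiating $\phi'$ itself. Since $d \ge 2$ gives $k(d-1) \ge k$, even a modest number of integrations by parts (two suffice, or $d$ of them to be safe) beats the $2^j$ length of the $y$-interval and the $2^{-j}$ amplitude derivatives, yielding decay $\lesssim 2^{j} \cdot 2^{-2k(d-1)} \cdot (\text{amplitude factors}) \lesssim 2^{-2k}$ once $k_0$ is chosen large enough that $k - j \ge k_0$ absorbs the constants. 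I should treat $d = 2$ as before by noting the phase is then $2xy$ and the integral is literally the Fourier transform of the Schwartz bump $\rho^+_j(x+\cdot)\rho^+_k(\cdot)$ at frequency $\approx 2^k$, whose scale is $2^j$, giving rapid decay in $2^k \cdot 2^j \gtrsim 2^{2k}$ — again more than enough.

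The main obstacle is bookkeeping rather than conceptual: I must verify that in the iterated integration by parts the terms arising from differentiating the nonconstant phase derivative $\phi'(y)$ (as opposed to the amplitude) do not destroy the gain. Concretely, $\frac{d}{dy}\bigl(1/\phi'(y)\bigr) = -\phi''(y)/\phi'(y)^2$, and one needs $|\phi''|/|\phi'| \lesssim 2^{-k}$, i.e. that the phase behaves like a single scale-$2^k$ oscillation; this follows from $|\phi^{(m)}(y)| \approx 2^{k(d-m)}$, which in turn rests on the elementary fact that for $|y|\approx 2^k$, $|x+y|\approx 2^j\ll 2^k$ the binomial expansion of $(x+y)^{d-1}$ is dominated by its leading $y^{d-1}$ term with all corrections smaller by factors of $2^{j-k}$. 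I would state this derivative bound as the key sublemma, prove it by the binomial expansion, and then the integration-by-parts estimate is routine. The choice of $k_0$ is dictated by requiring $2^{j-k} \le 2^{-k_0}$ small enough to make ``$y^{d-1}$ dominates'' and all the implied constants work.
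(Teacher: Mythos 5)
Your proposal is correct and follows essentially the paper's own route: exploit the scale separation $j<k-k_0$ to get a non\-stationary phase with $\lvert \phi'\rvert \approx 2^{k(d-1)}$ and integrate by parts (one step already suffices for $d\ge 3$), treating $d=2$ separately since the phase is linear in $y$, so the integral is the Fourier transform of a scale-$2^j$ bump of height $\approx 2^{-j-k}$ evaluated at frequency $\approx 2^{k}$ (the paper instead says: integrate by parts twice). Two harmless slips: your displayed integral is really $\tilde\psi_k\ast\psi_j(x)=\overline{\tilde\psi_j\ast\psi_k(-x)}$, which changes nothing since the bound is symmetric in $x$, and the inequality $2^k\cdot 2^j\gtrsim 2^{2k}$ is backwards --- the $d=2$ case closes instead because the decay factor $(2^{j+k})^{-N}$ combined with the amplitude's $L^1$ norm $\approx 2^{-k}$ (or just taking $N=2$) already gives $\lesssim 2^{-2k}$.
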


\begin{proof}
The  convolution is 
\begin{equation*}
  \tilde \psi_j \ast \psi_k (x) = 
  \int e (  (x+y) ^{d} - y ^{d} ) \rho  ^{+} _k (x+y) \rho ^{+}_j  (y) \; dy 
\end{equation*}

Since $(x+y)$ is so much larger than $y$, the derivative of the phase is $\approx 2^{k(d-1)}$, so the result follows by a simple integration by parts when $d \geq 3$. When $d = 2$, we notice that the second derivative (in $y$) of the phase vanishes, and integrate by parts twice.
\end{proof}

\subsection{Rademacher-Menshov Theorem}

We need a general principle to convert orthogonality inequalities into bounds  for maximal truncations. 
Namely, we need a  variant of the  Rademacher-Menshov inequality.   This has been observed many times, for an explicit formulation and proof, see  \cite{MR2403711}*{Thm 10.6}. 

\begin{lemma}\label{l:RM}  Let $ (X, \mu )$ be a measure space, and $ \{ \phi _j \,:\, 1\leq j \leq N\}$ a sequence of functions 
which satisfy the Bessel type inequality below, for all sequences of coefficients $c_j \in \{ 0, \pm 1\}$, 
\begin{equation}\label{e:bessel}
\Bigl\lVert \sum_{j=1} ^{N}  c_j \phi _j \Bigr\rVert _{L ^2 (X)} \leq A .  
\end{equation}
Then, there holds 
\begin{equation}\label{e:RM}
\Bigl\lVert\sup _{1< n \leq N} 
\Bigl\lvert 
 \sum_{j=1} ^{n}   \phi _j
\Bigr\rvert
\Bigr\rVert _{L ^2 (X)} \lesssim A   \log(2+ N) .  
\end{equation}

\end{lemma}

\section{Proof of the Main Theorem} \label{s:proof}
Let $k_0 \lesssim 1$ be as in Lemma~\ref{l:same} and Lemma~\ref{l:different}, and recall the notation in \eqref{e:rho}.  
The operator
\[ \aligned 
f &\mapsto  \sum_{j < k_0} \int e (y ^{d}) f (x-y)   \rho_j(y) \; dy \\
& \qquad = \sum_{j < 0} \int e (y ^{d}) f (x-y)   \rho_j(y) \; dy + \sum_{j=0}^{k_0} \int e (y ^{d}) f (x-y)   \rho_j(y) \; dy 
\endaligned\]
is the sum of two operators. The first is a Calder\'on-Zygmund operator, with Calder\'on-Zygmund norm bounded independently of $d$; its maximal truncations  satisfy the better $ (1,1)$ sparse bound as stated in Theorem~\ref{t:Tsparse}. 
The second is just bounded by a multiple of the Hardy-Littlewood maximal function, which is well known to satisfy a sparse $(1,1)$ bound.

It suffices to consider the complementary operator. For it, the fact that $ \rho _j$ integrates to zero  is not relevant, and it suffices to consider the operator 
\begin{equation} \label{e:M0}
 \sum_{k = k_0} ^{\infty } \int e (y ^{d}) \rho ^{+}_k (y) f (x-y) \; dy 
\end{equation}
where $ \rho ^{+}_k $ is as in \eqref{e:psi}. 
The maximal truncations of the operator in \eqref{e:M0} are dominated by 
\begin{equation*}
\sup _{l \geq k_0} 
\Bigl\lvert 
\sum_{k=l} ^{\infty } \int e (y ^{d}) \rho ^{+} _k (y) f (x-y) \; dy 
\Bigr\rvert +M _{HL} f =: T_{\ast } f + M_{HL}f.
\end{equation*}
 In analyzing $ T _{\ast } f$, there is no additional cancellation properties of $ f$ needed, and so we assume that $ f $ is non-negative, for simplicity below. 

We make a dyadic reduction, to facilitate the various localizations we will need. 
Recall that there are three \emph{shifted dyadic grids}  $ \mathcal D_j$, for $ j=1,2,3$, so that for each $k \in \mathbb{Z}$
\begin{equation*}
\sum_{j=1} ^{3} \sum_{|I| = 2^k, I \in \mathcal{D}_j} \mathbf 1_{I'} \equiv 1, \qquad I' = \tfrac 13I.  
\end{equation*}
For an interval, set 
\begin{equation} \label{e:TI}
T _{I} g (x) = \int   \psi _k (y)  (g \mathbf 1_{I'} ) (x-y)\; dy, \qquad \lvert  I\rvert = 2^{k+2}.   
\end{equation}
Recall that $ \psi _k (y) = e ( y ^{d}) \rho ^{+} _k (y)$ incorporates the oscillatory term into the kernel.  
With this choice $ T_I g $ is supported on $ I$.  Define, for a collection of intervals $ \mathcal I$, 
\begin{gather} \label{e:TD}
T _{\mathcal I} f := 
 \sum_{I \in  \mathcal I } T_I f 
, \qquad j =1,2,3.  
\\
\label{e:dyadic}
T _{\ast , \mathcal I} f := \sup _{l \geq k_0} 
\Bigl\lvert 
 \sum_{I \in \mathcal I \,:\, \lvert  I\rvert \geq 2 ^{l} } T_I f 
\Bigr\rvert, 
\end{gather}

It suffices to show the claimed sparse bounds for  $ T _{\ast ,\mathcal D_j ^{+}} $, for $ j=1,2,3$, where 
\begin{equation*}
\mathcal D_j ^{+} = \{I\in \mathcal D_j \,:\, \lvert  I\rvert \geq 2 ^{k_0} \}. 
\end{equation*}
There is no additional property of the shifted dyadic grids   used, so we suppress the subscript in  $  \mathcal D_j ^{+}$ below.

It is well known that  $ \lVert T _{\ast , \mathcal I} f \rVert_2 \lesssim \lVert f\rVert_2 $, for any subset $ \mathcal I \subset \mathcal D ^{+}$.  The main Lemma is an $ L^1 \to L ^q$ refinement of this inequality.

\begin{lemma}\label{l:sparse} Let $ K >4$ be a fixed constant.   For any interval $ I_0 \in \mathcal D$ and collection of subintervals $ \mathcal I$ of $ I_0$, provided 
\begin{equation}\label{e:K}
\sup _{I\in \mathcal I} \langle f \rangle_I \leq K \langle f \rangle _{I_0}, 
\qquad \sup _{I\in \mathcal I} \langle g \rangle_I \leq K \langle g \rangle _{I_0}, 
\end{equation}
we have the inequalities below, holding uniformly in $ 2\leq q < \infty $. 
\begin{align} \label{e:q*}
\langle  T _{\ast , \mathcal I} f, g\rangle & \lesssim 
q    \lvert  I_0\rvert \langle f \rangle _{I_0}  \langle g\rangle _{I_0,q'}. 
\end{align}
\end{lemma}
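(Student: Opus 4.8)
The strategy is to reduce to an $L^2$ estimate for the localized operator and then upgrade it to the $L^1 \to L^q$ statement by a combination of the $TT^*$ lemmas, the Rademacher--Menshov inequality, and interpolation/duality against $g$. Let me think about the structure.

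Key steps:
1. Split the sum over $I \in \mathcal{I}$ according to the hypotheses \eqref{e:K}. The point of the localization $T_I g = \psi_k * (g \mathbf{1}_{I'})$ with $|I| = 2^{k+2}$ is that we only see the part of $f$ with averages comparable to $\langle f \rangle_{I_0}$.

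2. The main oscillatory input: using Lemma~\ref{l:same} and Lemma~\ref{l:different}, show that $T_{\mathcal{I}}$ (and the maximal version) obeys a good $L^2$ bound, in fact an $L^2$ bound that *gains* over the trivial one because the kernels $\tilde\psi_j * \psi_k$ decay like $2^{-2k}$ off the diagonal and like $2^{-k}$ on a small ($O(1)$-length) neighborhood of the diagonal. Squaring $T_{\mathcal I} f = \sum_k \psi_k * (f\mathbf 1_{\cdot})$ and expanding $\langle T_{\mathcal I}f, T_{\mathcal I}f\rangle$ produces $\sum_{j,k}\langle \psi_j*(f\mathbf 1), \psi_k*(f\mathbf 1)\rangle = \sum_{j,k} \langle f \mathbf 1, \tilde\psi_j * \psi_k * (f \mathbf 1)\rangle$; the two lemmas bound the off-diagonal and near-diagonal pieces. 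Carrying the $\mathbf 1_{I'}$ localizations through, one gets something like $\|T_{\mathcal I} f\|_2^2 \lesssim |I_0| \langle f\rangle_{I_0}^2$ — i.e., the operator is essentially an $L^\infty \to L^2$-normalized average — with the crucial feature that the implied constant is absolute.

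3. Maximal truncations via Rademacher--Menshov: the Bessel-type hypothesis \eqref{e:bessel} is exactly the estimate that the $L^2$ calculation in Step 2 produces for arbitrary sign patterns $c_k \in \{0,\pm 1\}$ (the $TT^*$ bounds are insensitive to signs). Applying Lemma~\ref{l:RM} with $\phi_k = \psi_k * (f \mathbf 1_{\cdot})$ converts the $L^2$ bound on the sum into an $L^2$ bound on $\sup_l |\sum_{|I|\geq 2^l} T_I f|$, at the cost of a $\log N$ factor, where $N$ is the number of scales — but $N$ can be taken $\lesssim \log(|I_0|/\text{(smallest scale)})$, and more importantly this logarithm is what ultimately produces the factor $q$ in \eqref{e:q*} after one sums a geometric series over the Carleson measure estimate \eqref{e:CM}. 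Actually the cleaner route: the $\log$ is absorbed by a Carleson measure / sparseness argument, getting $\|T_{\ast,\mathcal I}f\|_{L^2} \lesssim \sqrt{|I_0|}\,\langle f\rangle_{I_0}$ up to manageable losses.

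4. From $L^2$ to $L^1 \to L^q$: pair against $g$. By Cauchy--Schwarz, $\langle T_{\ast,\mathcal I}f, g\rangle \leq \|T_{\ast,\mathcal I}f\|_2 \|g\|_2$, but that gives $\langle g\rangle_{I_0,2}$ on the right, not $\langle g \rangle_{I_0,q'}$ with the favorable constant $q$. To get the $q$-dependence and $q'$-average, one instead exploits the spatial localization: $T_I f$ lives on $I$, and there is a tail/Carleson structure controlling how much $L^2$-mass sits at each scale. Decompose $g = \sum_m g_m$ (or run a good-$\lambda$/stopping argument on $\langle g\rangle$), apply the $L^2$ bound on each piece, and sum; the number of terms that genuinely contribute is controlled by $q$, yielding the linear-in-$q$ constant. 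Alternatively, interpolate the $L^2$-type bound from Step 3 against the trivial $L^1 \to L^1$ bound for a single truncation to get $L^1 \to L^q$ for each fixed scale with norm $\lesssim 1$, then sum $\lesssim q$ scales' worth after the Carleson estimate kills the tail geometrically.

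The main obstacle: Step 4, extracting the precise $q$ (equivalently $\frac{1}{r-1}$) dependence. The $L^2$ theory is soft once Lemmas~\ref{l:same} and \ref{l:different} are in hand, and Rademacher--Menshov is a black box; the delicate part is organizing the scales and the level sets of $g$ so that only $O(q)$ of them matter, which is where the Carleson measure estimate \eqref{e:CM} and the Christ--Chanillo-style decay must be used quantitatively rather than qualitatively. I also expect bookkeeping friction in propagating the two-sided bound \eqref{e:K} through the $TT^*$ expansion without losing the absolute constant.
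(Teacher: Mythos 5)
Your outline identifies the right ingredients (the $TT^\ast$ lemmas, Rademacher--Menshov, a Carleson-measure input, duality against $g$), but at each of the three places where the proof is actually hard, the proposal either asserts the needed estimate or defers it, so there is a genuine gap. First, Step 2 claims that squaring out and invoking Lemmas~\ref{l:same}--\ref{l:different} gives $\lVert T_{\mathcal I}f\rVert_2^2\lesssim \lvert I_0\rvert\langle f\rangle_{I_0}^2$ ``softly,'' and Step 3 treats the $L^2$ theory as essentially done. But $f$ is only $L^1$-normalized, with average control \emph{only} over intervals of $\mathcal I$, so $f$ may concentrate on much smaller sets; handling this concentration is the core of the paper's argument, not a formality. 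The paper performs a stopping-time decomposition $f=\gamma+b$ (in which no cancellation of $b$ is available, because of the oscillation), splits $b=\sum_k B_k$ by the scale of the stopping intervals, and then splits each $\mathcal I(j)$ into a ``standard'' collection $\mathcal S(j,s)$, where the spread-out term of \eqref{e:same} dominates and one wins geometrically in $j$, and a ``non-standard'' collection $\mathcal N(j,s)$, which carries the real difficulty. None of this structure appears in your plan.

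Second, your application of Rademacher--Menshov is to the full family of scales, giving a loss $\log N$ with $N$ the number of scales in $\mathcal I$ --- this is unbounded, so the resulting constant is not uniform, and your remark that ``the log is absorbed by a Carleson measure / sparseness argument'' is precisely the missing idea rather than a step. In the paper, Lemma~\ref{l:RM} is applied only to the non-standard part, after the Carleson measure estimate \eqref{e:CM}, the removal of an exceptional set $F_s$ (via John--Nirenberg) and a recursion inside $F_s$; the surviving intervals are organized into $u_0\approx 2^s$ generations $\mathcal K_1,\dots,\mathcal K_{u_0}$, so the Rademacher--Menshov loss is only $\log u_0\approx s$, which is absorbed by the $2^{-s/2}$ gain in the Bessel estimate \eqref{e:N3}. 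Third, Step 4 --- producing the factor $q$ and the average $\langle g\rangle_{I_0,q'}$ --- is exactly where you acknowledge ``the main obstacle,'' and neither of your two suggested routes is carried out. The paper's mechanism is concrete: for the good function and the standard collection, interpolate the per-scale $L^2$ decay $2^{-k/2}$ against the trivial $L^\infty$ bound and sum $\sum_k 2^{-k/2q}\lesssim q$; for the non-standard part, prove the dualized $L^\infty$-type bound of Proposition~\ref{p:infty} (the one and only place the hypothesis \eqref{e:K} on $g$ enters), interpolate it against the $L^2$ bound \eqref{e:3prove} to get $2^{-s/5q}\lvert I_0\rvert\langle f\rangle_{I_0}\langle g\rangle_{I_0,q'}$, and sum over $s$ to produce the factor $q$. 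Without these pieces the proposal does not yield \eqref{e:q*}.
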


\begin{proof}[Proof of Theorem~\ref{t:monomial}, assuming Lemma~\ref{l:sparse}]
It suffices to show that for bounded compactly supported $ f, g$, we have for $2 \leq q < \infty$, 
\begin{equation} \label{e:qj}
\langle  T _{\ast , \mathcal D ^{+}}   f, g\rangle  \lesssim  q   \sum_{S\in \mathcal S} \lvert  S\rvert \langle    f  \rangle_S \langle    g   \rangle _{S, q' }, 
\end{equation}
where $ T _{\ast , \mathcal D ^{+}}  $ is defined in \eqref{e:TD}, and some choice of sparse collection $ \mathcal S$.  

We can assume that  non-negative $ f , g $ are supported on a  dyadic  interval  $ I_0 $.  Note that `above $I_0$' we have 
\begin{equation}
\mathbf{1}_{I_0} 
 \sum_{J \,:\, J\supset I_0} 
\lvert T_J f \rvert \lesssim \langle f \rangle _{I_0}, 
\end{equation}
Make the interval $ I_0$   the maximal element of the  sparse collection $ \mathcal S$.  
From this, it suffices to restrict the sum intervals $ I\subset I_0$. 
We take $ \mathcal I_0 = \{I \,:\,  I\subset I_0\}$,  further set $ \mathcal E $ to be the maximal subintervals $ K\subset I_0$ such that $ \langle f \rangle_K > 10 \langle f \rangle _{I_0}$ and/or $ \langle g \rangle_K > 10 \langle g \rangle _{I_0}$.  Then, the set $ E = \bigcup _{K\in \mathcal E} K$ 
has measure at most $ \tfrac 1 {5} \lvert  I_0\rvert $.   Let $ \mathcal I = \{I\in \mathcal I_0 \,:\, I\not\subset E\}$, 
and set $ \mathcal I_0 (K) := \{I\in \mathcal I_0 \,:\, I\subset K\}$. 
We have 
\begin{align*}
\lvert  \langle T _{\ast ,\mathcal I_0} f_1,f_2 \rangle\rvert  
&\leq \lvert  \langle T _{\ast , \mathcal I} f_1,f_2 \rangle\rvert  
+ \sum_{K\in \mathcal E_0} 
\lvert  \langle T _{\ast ,\mathcal I_{0} (K)} f_1,f_2 \rangle\rvert 
\end{align*}
But the first term on the right is  bounded by \eqref{e:q*}, namely 
\begin{equation*}
q \langle f \rangle _{I_0} \langle g \rangle _{I_0, q' } \lvert  I_0\rvert , 
\end{equation*}
And we add the collection $ \mathcal E$ to the sparse collection $ \mathcal S$, and then recurse.  

\end{proof}

\section{Proof of Lemma~\ref{l:sparse}} 
As a matter of simplicity, we assume that $ \langle f  \rangle _{I_0} =1 $.  The majority of the analysis will be done on $f$, with the  assumption \eqref{e:K}  used on the second function $g$ at just one point.
Then,  we take $ \mathcal B$ to be the maximal subintervals $ J$ of $ I_0$ 
so that 
$
 \langle f \rangle _{J} > K 
$. 
Then, write a `good-bad' decomposition of $ f = \gamma +b $ where $ b = \sum_{J\in \mathcal B} f \mathbf 1_{J}$. 
(This is different from the typical Calder\'on-Zygmund decomposition, since  cancellative properties of $ b$ can not be used in the oscillatory context!)   

The good function is easy to dispense with. 
\begin{proposition}\label{p:good}  We have 
\begin{align*}
\lVert T _{\ast , \mathcal I} \gamma \rVert_q 
& \leq \lVert T _{\ast , \mathcal I} \,:\, L ^q \mapsto L ^q \rVert  \cdot  \lVert \gamma \rVert_q \lesssim q \lvert  I_0\rvert ^{1/q}.  
\end{align*}

\end{proposition}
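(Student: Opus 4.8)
The plan is to reduce the claimed bound to the $L^2$ boundedness of $T_{\ast,\mathcal I}$ — which is quoted just before the Lemma — together with interpolation. First I would note that the good function $\gamma = f - b$ satisfies $\lVert \gamma\rVert_\infty \lesssim K \lesssim 1$ pointwise, by the maximal definition of the bad intervals $\mathcal B$: at every point not in $\bigcup_{J\in\mathcal B} J$ the dyadic averages of $f$ are bounded by $K$, so Lebesgue differentiation gives $\gamma \leq K$ a.e.\ off $\bigcup\mathcal B$, while on each $J\in\mathcal B$ we have $\gamma = 0$ there (since $b = \sum_{J\in\mathcal B} f\mathbf 1_J$ captures all of $f$ on those intervals). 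Hence $\lVert\gamma\rVert_\infty \lesssim 1$, and since $\gamma$ is supported in $I_0$ (because $f$ is), $\lVert\gamma\rVert_q \leq \lVert\gamma\rVert_\infty^{1-2/q}\,\lVert\gamma\rVert_2^{2/q}\cdot$ — more simply $\lVert\gamma\rVert_q \leq \lVert\gamma\rVert_\infty \lvert I_0\rvert^{1/q} \lesssim \lvert I_0\rvert^{1/q}$.

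Second, I would establish the operator norm bound $\lVert T_{\ast,\mathcal I} : L^q \mapsto L^q\rVert \lesssim q$ for $2 \leq q < \infty$. The endpoint $q=2$ is the quoted estimate $\lVert T_{\ast,\mathcal I} f\rVert_2 \lesssim \lVert f\rVert_2$, valid for any $\mathcal I \subset \mathcal D^+$. For large $q$ I would interpolate this $L^2$ bound against an $L^\infty \to \mathrm{BMO}$ (or weak-$L^\infty$/$L^\infty\to L^\infty$ along a Calder\'on--Zygmund line) estimate for $T_{\ast,\mathcal I}$: since each $T_I$ has a kernel supported on $I$ and of size $\lesssim 2^{-k}\mathbf 1_{I'}$ in a way that sums to a Calder\'on--Zygmund-type bound uniformly, the maximal truncation is bounded on $L^\infty$ with a constant $\lesssim 1$ independent of $d$, and the linear growth in $q$ comes from the Marcinkiewicz/Riesz--Thorin interpolation constant between $L^2$ and $L^\infty$, which is $O(q)$ as $q\to\infty$. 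Composing, $\lVert T_{\ast,\mathcal I}\gamma\rVert_q \leq \lVert T_{\ast,\mathcal I} : L^q\mapsto L^q\rVert \cdot \lVert\gamma\rVert_q \lesssim q\,\lvert I_0\rvert^{1/q}$, which is exactly the stated inequality.

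The main obstacle is making the $L^\infty$-side estimate for the \emph{maximal truncation} $T_{\ast,\mathcal I}$ clean enough that the interpolation constant is genuinely $O(q)$ and not worse; one has to be slightly careful that the sup over $l$ does not cost a logarithm here. This is where Lemma~\ref{l:RM} (Rademacher--Menshov) would be invoked if needed, but at the $L^\infty$ endpoint it should suffice to observe that for a fixed point $x$ the truncated sums $\sum_{I\in\mathcal I:\,|I|\geq 2^l} T_I\gamma(x)$ form a telescoping family controlled by the Hardy--Littlewood maximal function of $\gamma$ plus a geometrically convergent tail, giving the $O(1)$ $L^\infty\to L^\infty$ bound directly. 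With both endpoints in hand the proposition follows immediately by interpolation, the factor $q$ tracking the interpolation constant.
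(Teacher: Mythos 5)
Your first step is fine and matches what the paper uses implicitly: $\gamma$ is bounded pointwise by a constant (because of the stopping condition defining $\mathcal B$ and the normalization $\langle f\rangle_{I_0}=1$) and is supported in $I_0$, so $\lVert\gamma\rVert_q\lesssim \lvert I_0\rvert^{1/q}$. The gap is in the operator-norm bound $\lVert T_{\ast,\mathcal I}:L^q\to L^q\rVert\lesssim q$. You propose to interpolate the global $L^2$ bound against an $O(1)$ bound of the full maximal operator on $L^\infty$ (or into BMO), and you justify that endpoint by a ``telescoping sums controlled by $M_{HL}\gamma$ plus a geometrically convergent tail'' argument. That endpoint claim is not true as stated: for a fixed $x$ there is exactly one $I\in\mathcal D$ of each length $2^k\geq 2^{k_0}$ containing $x$, and the trivial bound for each is $\lvert T_I f(x)\rvert\leq\lVert\psi_k\rVert_1\lVert f\rVert_\infty\approx\lVert f\rVert_\infty$ with \emph{no} decay in $k$; since $\rho^+_k$ is the positive part of $\rho_k$ it carries no cancellation, so the sum over infinitely many scales has no geometrically convergent tail and is not dominated by the maximal function. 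Any $L^\infty$ or BMO control of the full (maximally truncated) operator must exploit the oscillation $e(y^d)$, i.e.\ exactly the $TT^{\ast}$ estimate \eqref{e:same}, so it cannot be dismissed as trivial. There is also a structural mismatch: if an $O(1)$ bound $L^\infty\to L^\infty$ really were available, Riesz--Thorin between $(2,2)$ and $(\infty,\infty)$ would give an $O(1)$ bound on $L^q$, so an ``interpolation constant'' is not where the factor $q$ can come from.

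The paper's proof sidesteps this by working scale by scale: with $\mathcal I(k)=\{I\in\mathcal I:\lvert I\rvert=2^k\}$, the $L^\infty\to L^\infty$ bound for $T_{\ast,\mathcal I(k)}$ is genuinely trivial (one interval per point, kernel of $L^1$ norm $O(1)$), while \eqref{e:same} gives $\lVert T_{\ast,\mathcal I(k)}:L^2\to L^2\rVert\lesssim 2^{-k/2}$; interpolating gives $\lesssim 2^{-k/(2q)}$ on $L^q$, and summing the geometric series over $k\geq k_0$ yields $\sum_k 2^{-k/(2q)}\lesssim q$. Thus the factor $q$ arises from the summation over scales, not from an interpolation constant. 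To repair your argument, perform the $L^2$--$L^\infty$ interpolation for each fixed scale and then sum, which is precisely the paper's route.
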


\begin{proof}
This depends upon the $ L ^{q}$ norm estimate, which is easy.  
Let $ \mathcal I _0 (k) := \{I\in \mathcal I \,:\, \lvert  I\rvert = 2 ^{k}\}$. We have 
\begin{gather*}
\lVert T _{\ast , \mathcal I (k)} \,:\, L ^2 \mapsto L ^2 \rVert \lesssim 2 ^{-k/2}, 
\qquad 
\lVert T _{\ast , \mathcal I (k)} \,:\, L ^ \infty  \mapsto L ^ \infty  \rVert \lesssim 1. 
\end{gather*}
The first follows from the oscillatory estimate \eqref{e:same}, while the second is trivial. Interpolating, we have for $2\leq q < \infty$,
\begin{align*}
\lVert T _{\ast , \mathcal I} \,:\, L ^q  \to L ^q \rVert   
&\lesssim \sum_{k=k_d} ^{\infty } \lVert T _{\ast , \mathcal I (k)} \,:\, L ^q \mapsto L ^q \rVert
\lesssim \sum_{k=k_d} ^{\infty }  2 ^{-k/2q} \lesssim q. 
\end{align*}

\end{proof}

Thus, it remains to consider the `bad' function.  Some additional notations are required before we arrive at the core of the argument.  Let $ \mathcal B (k) = \{J\in \mathcal B \,:\,  \max\{ 2 ^{k_0}, \lvert  J\rvert \} = 2 ^{k} \} $, for $k\geq k_0$,  and 
\begin{equation}\label{e:B}
b = \sum_{k=k_0} ^{\infty } \sum_{J\in \mathcal B (k)} f \mathbf 1_{J} =:   \sum_{k=k_0} ^{\infty } B_k .  
\end{equation}
Note that if $ I\in \mathcal I$ and $ J\in \mathcal B$, we must have $ I\cap J \in \{\emptyset , J\}$. 
(That is, the `bad' interval must be smaller.) 
Therefore, we have 
\begin{align*}
\sum_{I\in \mathcal I} T_I b 
= \sum_{s= 0} ^{\infty }  \sum_{j = k_0} ^{\infty } T _{\mathcal I  (j)} B _{j-s}, 
\qquad \mathcal I (k) = \{I\in \mathcal I \,:\, \lvert  I\rvert = 2 ^{k} \}. 
\end{align*}
The crucial facts of the $ \{B_j\}$ are 
\begin{gather} \label{e:Bsum}
\sum_{j} \lVert B_j\rVert_1 \leq \lvert  I_0\rvert, 
\\
\label{e:BI}
\int _{K} B_j (y) \; dx \lesssim  2 ^{j}, \text{ for any } |K| \lesssim 2^j, \qquad j\geq {k_0}
\end{gather}

\bigskip 
The subsequent analysis depends upon the choice of $ 0 \leq s \leq j- k_0$.
We turn to the inequality \eqref{e:same}.
At the coarsest scale, the division of $ \mathcal I (j)$ is into 
$\bigcup_{s \geq 0} \mathcal S (j,s) \cup \mathcal N (j,s),$ where $ \mathcal N (j,s)$  consists of  those $ I\in \mathcal I (j)$ such that 
\begin{equation}\label{e:non}
| T_I^{\ast} T_I B _{j-s}| 
\leq 100 C _{\eqref{e:same}} \left( |I|^{-1} 1_{[-1/2,1/2]} * (B_{j-s} \mathbf{1}_{I'}) \right)
\end{equation}
Above, we use the implied constant $  C _{\eqref{e:same}}$ of \eqref{e:same}.  
The collection $ \mathcal S (j,s) $ are the intervals $I \in \mathcal{I}(j)$ so that \eqref{e:non} fails; these are the complementary, or `standard' collection, namely those intervals for which the second term on the right in \eqref{e:same} is dominant.

The standard collection is easy to analyze.  

\begin{lemma}\label{l:standard}We have the inequalities 
\begin{equation*}
\Bigl\lVert 
\sup _{j_0 \geq k_0} 
\Bigl\lvert \sum_{s=0} ^{\infty } 
\sum_{j = s+k_0} ^{\infty } T _{\mathcal S (j,s)} B _{j-s}
\Bigr\rvert\, 
\Bigr\rVert_q 
\lesssim   q \lvert  I_0\rvert ^{1/q}, \qquad 1 < q < \infty . 
\end{equation*}
 
\end{lemma}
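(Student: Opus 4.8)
<br>

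The plan is to estimate the "standard" collection $\mathcal S(j,s)$ by exploiting that on these intervals the operator $T_I^\ast T_I B_{j-s}$ is controlled by the \emph{second} term in \eqref{e:same}, i.e.\ by $2^{-2|I|}\mathbf 1_{[-2^{|I|+1},2^{|I|+1})}$ convolved with $B_{j-s}\mathbf 1_{I'}$. First I would record the pointwise/size consequence: for $I\in\mathcal S(j,s)$ with $|I|=2^j$, combining \eqref{e:non}'s failure with \eqref{e:same} and the mass bound \eqref{e:BI} gives $\|T_I B_{j-s}\|_2^2 = \langle T_I^\ast T_I B_{j-s}, B_{j-s}\mathbf 1_{I'}\rangle \lesssim 2^{-2j}\,\|B_{j-s}\mathbf 1_{I'}\|_1^2 \cdot (\text{something summable})$, so that $\|T_I B_{j-s}\|_2 \lesssim 2^{-j}\|B_{j-s}\mathbf 1_{I'}\|_1$, and in fact $\|T_I B_{j-s}\|_\infty \lesssim 2^{-j}\|B_{j-s}\mathbf 1_{I'}\|_1 \lesssim 1$ using \eqref{e:BI} again. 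This is the gain: each such piece is uniformly bounded and supported on $I$, and distinct $I\in\mathcal I(j)$ of the same generation are disjoint.

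Next I would handle the supremum over $j_0$ (the maximal truncation) and the double sum over $s$ and $j$. Since for fixed generation $j$ the intervals $I\in\mathcal S(j,s)$ are pairwise disjoint and each $T_I B_{j-s}$ is supported on $I$, the inner sum $T_{\mathcal S(j,s)}B_{j-s}=\sum_I T_I B_{j-s}$ is a genuine "single-scale" object with $\|T_{\mathcal S(j,s)}B_{j-s}\|_\infty\lesssim 1$ and $\|T_{\mathcal S(j,s)}B_{j-s}\|_1 \lesssim \sum_I \|B_{j-s}\mathbf 1_{I'}\|_1 \lesssim \|B_{j-s}\|_1$. To pass the supremum over $j_0$ inside I would apply the Rademacher–Menshov Lemma~\ref{l:RM}: the relevant Bessel inequality for $\{T_{\mathcal S(j,s)}B_{j-s}\}_{j}$ follows from Lemma~\ref{l:same} and Lemma~\ref{l:different} (almost-orthogonality across scales after the $T T^\ast$ reduction), with $A \lesssim (\sum_j \|T_{\mathcal S(j,s)}B_{j-s}\|_2^2)^{1/2}$. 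Since there are boundedly many relevant scales up to a given truncation within each interval $I_0$ — or more precisely since the $\log$ loss is absorbed — the $\sup_{j_0}$ costs only a logarithmic factor, and the geometric decay in $s$ (coming from the localization $\mathcal I(j)$ with $J\in\mathcal B(j-s)$ forcing $J$ much smaller, hence $\|B_{j-s}\mathbf 1_{I'}\|_1$ controlled with a $2^{-cs}$-type gain, or at worst summability after interpolation) kills the $s$-sum.

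The $L^q$ bound for $2\le q<\infty$ I would obtain by interpolation between $L^2$ and $L^\infty$. For $L^\infty$: $\|\sup_{j_0}|\sum_s\sum_j T_{\mathcal S(j,s)}B_{j-s}|\|_\infty$ — here one cannot just sum $\|\cdot\|_\infty\lesssim 1$ over all $j$, so instead I would use that for fixed $x\in I_0$ only finitely many generations $j$ contribute to the sum at $x$ with size $\gtrsim 2^{-(j-\log\langle f\rangle)}$, together with the $\ell^2$-type square function bound, to get $\lesssim q$-type growth; alternatively one directly proves the $L^2$ bound $\lesssim |I_0|^{1/2}$ (from $\sum_{j,s}\|T_{\mathcal S(j,s)}B_{j-s}\|_2^2 \lesssim \sum_j 2^{-j}\|B_{j-s}\|_1 \lesssim \sum_j \|B_j\|_1 \lesssim |I_0|$ by \eqref{e:Bsum}) and an $L^\infty$ bound $\lesssim \log$, then interpolates to get the factor $q$ in $L^q$. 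I would then assemble: the $L^q$ norm is $\lesssim q\,|I_0|^{1/q}$, matching the claim.

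The main obstacle I anticipate is bookkeeping the interaction of three things at once — the maximal truncation $\sup_{j_0}$, the scale sum over $j$, and the shift sum over $s$ — while keeping the Rademacher–Menshov logarithmic loss under control and extracting genuine decay (rather than mere summability) in $s$ from the geometric constraint $J\in\mathcal B(j-s)$, $|J|=2^{j-s}\ll 2^j=|I|$. Concretely, the delicate point is verifying the Bessel inequality \eqref{e:bessel} for the family $\{T_{\mathcal S(j,s)}B_{j-s}\}_j$ with the correct constant $A$: this requires the $TT^\ast$ almost-orthogonality of Lemmas~\ref{l:same}–\ref{l:different} applied not to the bare kernels $\psi_k$ but to the localized, restricted-to-$\mathcal S(j,s)$ operators acting on the mass-normalized pieces $B_{j-s}$, and one must check that the "standard" restriction \eqref{e:non} does not destroy this orthogonality — it should not, since it only discards intervals, but the argument must be made carefully with the disjointness of $\{I\in\mathcal I(j)\}$ at each fixed generation.
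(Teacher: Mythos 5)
Your single--scale estimates are the right ones (for $I\in\mathcal S(j,s)$ the second term of \eqref{e:same} dominates, giving $\lVert T_I B_{j-s}\rVert_2^2\lesssim |I|^{-2}\lVert B_{j-s}\mathbf 1_{I'}\rVert_1^2\lesssim 2^{-j}\lVert B_{j-s}\mathbf 1_{I'}\rVert_1$, plus the trivial $L^\infty$ bound $\lesssim1$ and the mass bound \eqref{e:Bsum}), but the assembly has a genuine gap: neither of your two proposed ways of producing the final bound $q\,\lvert I_0\rvert^{1/q}$ and of handling $\sup_{j_0}$ works as stated. The paper's route is much more elementary: fix the scale $j$, bound $\bigl\lVert\sum_{s\le j-k_0}T_{\mathcal S(j,s)}B_{j-s}\bigr\rVert_\infty\lesssim1$ (since $\sum_s B_{j-s}\mathbf 1_{I'}\le f\mathbf 1_{I'}$ and averages over $I\in\mathcal I$ are bounded) and $\bigl\lVert\sum_{s}T_{\mathcal S(j,s)}B_{j-s}\bigr\rVert_2^2\lesssim j\,2^{-j}\lvert I_0\rvert\lesssim 2^{-j/2}\lvert I_0\rvert$ (Cauchy--Schwarz in $s$ costs $j$; the gain is the factor $\lvert I\rvert^{-2}\lVert B_{j-s}\mathbf 1_I\rVert_1\lesssim 2^{-j}$), then interpolate \emph{at fixed $j$} to get $\lesssim 2^{-j/2q}\lvert I_0\rvert^{1/q}$ in $L^q$, and sum the geometric series: $\sum_{j\ge k_0}2^{-j/2q}\approx q$. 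That summation is precisely where the constant $q$ comes from, and because the $j$-pieces are summed absolutely in $L^q$ with geometric decay, the maximal truncation $\sup_{j_0}$ is handled for free by the triangle inequality; no Rademacher--Menshov and no cross-scale orthogonality (Lemma~\ref{l:different}) are needed for the standard collection.

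Concretely, the problems with your assembly are: (a) in the Rademacher--Menshov route, the number of scales $j$ inside $I_0$ is $\approx\log_2\lvert I_0\rvert$, which is unbounded, so the $\log(2+N)$ loss in Lemma~\ref{l:RM} is not ``absorbed'' by anything in your argument --- in the paper that device is reserved for the nonstandard collection, where the Carleson measure estimate \eqref{e:CM} and John--Nirenberg cap the number of generations at $\approx 2^s$ so that the log loss $\approx s$ is beaten by the decay $2^{-s/2}$; (b) in the interpolation route, there is no uniform (nor ``$\log$'') $L^\infty$ bound for the full sum $\sum_{j}\sum_s T_{\mathcal S(j,s)}B_{j-s}$: each scale $j$ genuinely contributes $\approx1$ in $L^\infty$, and the claim that only finitely many generations contribute at a given $x$ is unsubstantiated, so interpolating your global $L^2$ bound $\lesssim\lvert I_0\rvert^{1/2}$ against it cannot yield $q\,\lvert I_0\rvert^{1/q}$; (c) your asserted $2^{-cs}$ gain on $\lVert B_{j-s}\mathbf 1_{I'}\rVert_1$ is not correct as stated --- the mass of $B_{j-s}$ on $I'$ need not be small because the bad intervals are short; the decay for the standard collection is in $j$ (from $\lvert I\rvert^{-2}$), and any decay in $s$ is only inherited through the constraint $j\ge s+k_0$.
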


\begin{proof}
We will see gain as the scale parameter $j$ increases.  
There is no cancellation needed.  Since $ \langle b \rangle_I \lesssim 1$ for all $ I\in \mathcal I$, we have for fixed $j$, 
\begin{align*}
\Bigl\lVert \sum_{s=0} ^{j-k_0 }  T _{\mathcal S (j,s)} B _{j-s} \Bigr\rVert _{\infty } \lesssim \sup _{I\in \mathcal{I}} \langle f\rangle_I \lesssim 1.  
\end{align*}
But, also, by construction of the standard collection, the second term in \eqref{e:same} dominates, hence 
\begin{align*}
\Bigl\lVert \sum_{s=0} ^{j-k_0 }    T _{\mathcal S (j,s)} B _{j-s} \Bigr\rVert  _{2 }  ^{2} 
& \lesssim
j  \sum_{s=0} ^{j-k_0 }   \lVert T _{\mathcal S (j,s)} B _{j-s} \rVert  _{2 }  ^{2}
\\& \lesssim j
\sum_{s=0} ^{j-k_0 } \sum_{I \in \mathcal{S}(j,s)} 
\vert I \rvert^{-2} \| B_{j-s} \mathbf{1}_I \|_1 ^2
\\ & 
\lesssim  j \sup_s  \sup_{I \in \mathcal{S}(j,s)} \vert I \rvert^{-2} \| B_{j-s} \mathbf{1}_I \|_1  
\sum_{s=0} ^{j-k_0 } \sum_{I \in \mathcal{S}(j,s)} 
 \| B_{j-s} \mathbf{1}_I \|_1 \lesssim 2 ^{-j/2} \lvert I_0\rvert . 
\end{align*}
The geometric decay comes from the length of $I$. 

Interpolating between these two bounds shows that 
\begin{equation*}
\Bigl\lVert \sum_{s=0} ^{j-k_0 }  T _{\mathcal S (j,s)} B _{j-s} \Bigr\rVert_q 
\lesssim 2 ^{-j/2q} \lvert  I_0\rvert  ^{1/q}, \qquad  2< q < \infty. 
\end{equation*}
Summing over $ j\geq k_0$   proves the estimate. 

\end{proof}

In the remainder of the argument, we hold $ s \geq 0$ fixed, and gain geometric decay in $ s$. 
A key estimate is the $ L ^{\infty }$ bound. 
\begin{proposition}\label{p:infty}
Assume that the function $g$ satisfies \eqref{e:K}. 
Uniformly in $ s\geq 0$, we have 
\begin{equation}\label{e:infty}
 \Bigl\langle 
 \sup _{j_0 \geq s+k_0} 
\Bigl\lvert  
\sum_{j = j_0} ^{\infty } T _{\mathcal N(j,s)} B _{j-s}\Bigr\rvert ,g 
\Bigr\rangle  \lesssim  \lvert I_0\rvert \langle f\rangle_{I_0} \langle g\rangle_{I_0}. 
\end{equation}
\end{proposition}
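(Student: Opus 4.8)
The target inequality \eqref{e:infty} is an $L^\infty$-type estimate: after pairing against $g$ and using the (weak) control \eqref{e:K} on $g$, it suffices to show that
$
\bigl\lVert \sup_{j_0}\bigl\lvert \sum_{j\geq j_0} T_{\mathcal N(j,s)} B_{j-s}\bigr\rvert\bigr\rVert_\infty \lesssim \langle f\rangle_{I_0}=1,
$
uniformly in $s\geq 0$; then $\langle\,\cdot\,,g\rangle \lesssim |I_0|\langle g\rangle_{I_0}$ follows since the whole expression is supported on $I_0$, and \eqref{e:K} lets us replace $\langle g\rangle_{I_0}$-type local averages by the global one. So the real content is a pointwise/$L^\infty$ bound on the maximal truncation of $\sum_j T_{\mathcal N(j,s)}B_{j-s}$. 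The plan is to exploit the \emph{defining property} \eqref{e:non} of the non-standard collection $\mathcal N(j,s)$: for $I\in\mathcal N(j,s)$ the operator $T_I^\ast T_I$ acting on $B_{j-s}$ is dominated by the very soft averaging operator $|I|^{-1}\mathbf 1_{[-1/2,1/2]}\ast(B_{j-s}\mathbf 1_{I'})$, which has \emph{no oscillation} and is essentially a rescaled Hardy--Littlewood average. This is what makes an $L^\infty$ bound plausible at all.

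First I would fix $s$ and, for the maximal truncation in $j_0$, invoke the Rademacher--Menshov principle, Lemma~\ref{l:RM}, to pass from the supremum over partial sums $\sum_{j\geq j_0}$ to a single square-function / Bessel-type estimate — though here, since we want an $L^\infty$ rather than $L^2$ bound, I expect instead to use the Carleson measure estimate \eqref{e:CM} advertised in the introduction together with the Rademacher--Menshov dichotomy, pointwise over each dyadic interval. Concretely, on a fixed point $x$, only the intervals $I\in\mathcal I(j)$ containing $x$ contribute, one per scale $j$, and for those the bound \eqref{e:non} gives
\[
\bigl\lvert T_I^\ast T_I B_{j-s}(x)\bigr\rvert \lesssim |I|^{-1}\!\!\int_{I'} B_{j-s}\lesssim |I|^{-1} 2^{j-s}\lesssim 2^{-s},
\]
using the key mass bound \eqref{e:BI} (with $|I'|\lesssim 2^j$ and $j-s\le j$). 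This is exactly the source of the geometric decay in $s$. Summing a $TT^\ast$-type expansion, $\bigl\lVert\sum_j T_{\mathcal N(j,s)}B_{j-s}\bigr\rVert_\infty^2 \lesssim \sup_x \sum_{j}\bigl\lvert T_{I_j}^\ast T_{I_j} B_{j-s}(x)\bigr\rvert \cdot(\text{off-diagonal sum})$, where the off-diagonal terms $T_{I_j}^\ast T_{I_k}$ with $j\neq k$ are controlled by Lemma~\ref{l:different}, and collapse geometrically; this yields the bound $\lesssim 2^{-s}$, hence summable in $s$.

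The main obstacle, and the place I would spend the most care, is making the maximal truncation in $j_0$ genuinely harmless: the supremum over $j_0\ge s+k_0$ of partial sums does not commute with the $TT^\ast$ manipulation, so one must either (a) run Rademacher--Menshov (Lemma~\ref{l:RM}) to trade the sup for a $\log$ loss in the number of scales and then absorb that log into the geometric-in-$s$ and geometric-in-scale decay, or (b) set up the Carleson measure bound \eqref{e:CM} so that the maximal function is controlled directly by a stopping-time / Carleson argument on the dyadic grid $\mathcal D$. I expect option (b), following Christ \cite{MR951506} and Chanillo--Christ \cite{MR883667} as indicated in the introduction, to be cleaner: the intervals in $\mathcal N(j,s)$ carry a Carleson packing condition inherited from $\sum_j\lVert B_j\rVert_1\le|I_0|$ (that is, \eqref{e:Bsum}), so the associated measure $\sum_{I\in\mathcal N(j,s)}\lVert B_{j-s}\mathbf 1_I\rVert_1\,\delta_{(c_I,|I|)}$ is a Carleson measure with constant $\lesssim|I_0|$, and the maximal truncation is bounded by the corresponding Carleson embedding applied to the soft averages. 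Either way the $\log$ or Carleson overhead is defeated by the gain $2^{-s}$ from \eqref{e:BI}, and one concludes \eqref{e:infty} with the $A_1$-type constant $\langle f\rangle_{I_0}\langle g\rangle_{I_0}$ after restoring the normalization and applying \eqref{e:K} to $g$.
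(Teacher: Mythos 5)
There is a genuine gap: your reduction of \eqref{e:infty} to a uniform $L^\infty$ bound $\bigl\lVert \sup_{j_0}\bigl\lvert\sum_{j\geq j_0}T_{\mathcal N(j,s)}B_{j-s}\bigr\rvert\bigr\rVert_\infty\lesssim 1$ is a strictly stronger claim than the proposition, and your argument for it does not hold up. First, the per-scale estimate is wrong: \eqref{e:BI} gives $\int_K B_{j-s}\lesssim 2^{j-s}$ only for $\lvert K\rvert\lesssim 2^{j-s}$, whereas $\lvert I'\rvert\approx 2^{j}$; covering $I'$ by $\approx 2^{s}$ such windows yields only $\lvert I\rvert^{-1}\int_{I'}B_{j-s}\lesssim 1$, not $2^{-s}$. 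Indeed, by the very construction of $\mathcal N(j,s)$ (each such $I$ contains a bad interval of generation $j-s$, see the proof of \eqref{e:CM}) one has $\langle B_{j-s}\rangle_I\gtrsim 2^{-s}$, and it can be as large as a constant, so no decay in $j$ or in $s$ is available scale by scale. Consequently the pointwise sum over the unboundedly many scales $j\geq j_0$ does not converge to an $O(1)$ bound, and the proposed ``$TT^\ast$ expansion in $L^\infty$'' (bounding $\lVert\sum_j T_{\mathcal N(j,s)}B_{j-s}\rVert_\infty^2$ by a supremum of diagonal terms times an off-diagonal sum) is not a valid inequality. You are also importing the wrong machinery: the Rademacher--Menshov lemma, the Carleson measure estimate \eqref{e:CM}, and the $2^{-s}$ gain all belong to the $L^2$ estimate (Lemma~\ref{l:3prove}), where \eqref{e:BI} is applied to the \emph{unit-length} window $\mathbf 1_{[-1/2,1/2]}$; Proposition~\ref{p:infty} is stated uniformly in $s$, with no gain, precisely because its role is the $q'=1$ endpoint for interpolation.

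The paper's proof is much softer and is an $L^1$-pairing argument, not a sup-norm bound. Linearize the truncation with a measurable $\varepsilon(x)$, set $\tilde T_I f=\mathbf 1_{\{\lvert I\rvert>\varepsilon(x)\}}T_If$, and use only the trivial kernel size $\lvert\psi_k\rvert\lesssim 2^{-k}$ to get $\lvert \tilde T_I B_{j-s}\rvert\lesssim \lvert I\rvert^{-1}\lVert B_{j-s}\mathbf 1_{I'}\rVert_1\,\mathbf 1_I$, hence
\begin{equation*}
\lvert\langle \tilde T_I B_{j-s},g\rangle\rvert\lesssim \Bigl(\int_{I}B_{j-s}\,dx\Bigr)\,\langle g\rangle_I\lesssim \Bigl(\int_{I}B_{j-s}\,dx\Bigr)\,\langle g\rangle_{I_0},
\end{equation*}
the last step being the only use of \eqref{e:K}, applied to $g$. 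Summing over the (pairwise disjoint) intervals of a fixed scale $j$ and then over $j$, the total is $\lesssim \sum_{j}\lVert B_{j-s}\rVert_1\,\langle g\rangle_{I_0}\lesssim \lvert I_0\rvert\langle f\rangle_{I_0}\langle g\rangle_{I_0}$ by \eqref{e:Bsum}. The convergence of the sum over scales comes entirely from the total mass of the bad function --- an $L^1$ feature that is invisible to any $L^\infty$ reduction, which is why your plan cannot be repaired by sharpening the pointwise estimates.
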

\begin{proof}
We linearize the maximal truncations via a measurable selection 
function $ \varepsilon (x)$, and set
\begin{equation*}
\tilde T _{I} f (x) =   \mathbf 1_{ \{\lvert  I\rvert > \varepsilon (x) \}} T_I f (x).  
\end{equation*}
Then, for any integrable function $ g$ on $ I_0$ we have 
\begin{align*}
\Bigl\langle  
\sum_{j = s+k_0} ^{\infty }  \sum_{I\in \mathcal N (j,s)} \tilde T _I B _{j-s}, g 
\Bigr\rangle
& = 
\sum_{j = s+k_0} ^{\infty }  \sum_{I\in \mathcal N (j,s)}  \langle  \tilde T _I  B _{j-s} ,   g\rangle 
\\
& \lesssim  \sum_{j = s+k_0} ^{\infty }  \sum_{I\in \mathcal N (j,s)}  \int _{I} B _{j-s} \; dx  \cdot 
\langle g\rangle_{I}
\lesssim   \lvert  I_0\rvert \langle f\rangle_{I_0}\langle g\rangle_{I_0}
\end{align*}
by \eqref{e:Bsum}.  This proves \eqref{e:infty}.  

\end{proof}

The principle estimate, indeed the core of the argument, concerns the $ L ^{2} $ estimate for the maximal truncations. 

\begin{lemma}\label{l:3prove} We have this estimate, uniformly in $ s \geq 0$.  
\begin{equation}\label{e:3prove}
\Bigl\lVert 
\sup _{j_0 \geq s+k_0} 
\Bigl\lvert  
\sum_{j = j_0} ^{\infty } T _{\mathcal N(j,s)} B _{j-s}
\Bigr\rvert\, 
\Bigr\rVert_2 
\lesssim    2 ^{-s/5}  \lvert  I_0\rvert ^{1/2}. 
\end{equation}

\end{lemma}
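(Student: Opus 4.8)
The plan is to prove \eqref{e:3prove} by combining a single-scale orthogonality estimate with the Rademacher--Menshov inequality of Lemma~\ref{l:RM}. Fix $ s \geq 0$. For $ j \geq s + k_0 $ write $ \phi _j := T _{\mathcal N (j,s)} B _{j-s}$, so that the quantity to be bounded is $ \lVert \sup _{j_0} \lvert \sum _{j \geq j_0} \phi _j \rvert \rVert _2 $. Since the supremum is over $ j_0 $ and the full sum $ \sum _{j \geq s + k_0} \phi _j $ can be split off and estimated by the $ j_0 = s+k_0 $ case of the Bessel bound below, it suffices to control $ \lVert \sup _{s+k_0 \leq j_0 \leq N} \lvert \sum _{j = s+k_0} ^{j_0} \phi _j \rvert \rVert _2 $ uniformly in $ N $; the genuinely infinite tail is handled by the geometric decay in $ j $ that we will also extract. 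So the two things to establish are: (i) a \emph{Bessel-type inequality} $ \lVert \sum _j c_j \phi _j \rVert _2 \lesssim 2 ^{-s/4} \lvert I_0 \rvert ^{1/2} $ for all $ c_j \in \{0, \pm 1\}$, with a bit of room to spare, and (ii) control of the logarithmic loss from Lemma~\ref{l:RM}, which must be absorbed by the geometric decay.

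For step (i), I would expand $ \lVert \sum _j c_j \phi _j \rVert _2 ^2 = \sum _{j,j'} c_j c_{j'} \langle \phi _j, \phi _{j'} \rangle $ and estimate the off-diagonal and diagonal terms. Each $ \phi _j $ is supported on the union of the intervals $ I \in \mathcal N (j,s) $, which have common length $ 2^j $; passing through the $ TT^\ast $ identity, $ \langle \phi _j, \phi _{j'} \rangle $ is governed by $ \tilde \psi _{k} \ast \psi _{k'} $ with $ k = j - \cdot $, and the key input is that on $ \mathcal N (j,s) $ the \emph{first} term of \eqref{e:same} is dominant, i.e. we have the pointwise bound \eqref{e:non}: $ \lvert T_I ^\ast T_I B_{j-s} \rvert \lesssim \lvert I \rvert ^{-1} \mathbf 1_{[-1/2,1/2]} \ast (B_{j-s} \mathbf 1_{I'}) $. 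Using \eqref{e:BI}, namely $ \int _K B_{j-s} \lesssim 2^{j-s} $ for $ \lvert K \rvert \lesssim 2^{j-s} $, one gets $ \lVert \phi _j \rVert _2 ^2 \lesssim \sum _{I \in \mathcal N (j,s)} \lvert I \rvert ^{-2} \cdot 2^{j-s} \cdot \lVert B_{j-s} \mathbf 1_I \rVert _1 $; summing over $ I $ and using $ \sum _j \lVert B_{j} \rVert _1 \leq \lvert I_0 \rvert $ from \eqref{e:Bsum} yields a factor $ 2^{-j} \cdot 2^{j-s} = 2^{-s} $ per scale, hence $ \sum _j \lVert \phi _j \rVert _2 ^2 \lesssim 2^{-s} \lvert I_0 \rvert $. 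For the off-diagonal terms $ j \neq j' $, Lemma~\ref{l:same} (for $ j = j' $ pieces at different sub-scales) and Lemma~\ref{l:different} (for $ \lvert j - j' \rvert $ large) provide the decay: the cross terms are summable and contribute at most the same order $ 2^{-s} \lvert I_0 \rvert $, possibly with a harmless polynomial-in-$ (j-j') $ loss that sums. This gives the Bessel bound with constant $ A \lesssim 2^{-s/2} \lvert I_0 \rvert ^{1/2} $.

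For step (ii), applying Lemma~\ref{l:RM} to the family $ \{\phi _j\}_{s+k_0 \leq j \leq N} $ with $ A \lesssim 2^{-s/2} \lvert I_0 \rvert ^{1/2} $ costs a factor $ \log (2 + N) $. To remove the $ N $-dependence, I would exploit the per-scale geometric decay more carefully: group the scales $ j $ into blocks $ j \in [2^m, 2^{m+1}) $, apply Rademacher--Menshov \emph{within} each block (where the block has $ O(2^m) $ scales, so the log cost is $ O(m) $, while the Bessel constant for that block is $ \lesssim (\sum _{j \geq 2^m} \lVert \phi _j \rVert _2 ^2 ) ^{1/2} $, which already carries decay), then sum the block contributions, which converges. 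Alternatively, and more cleanly, one notes that the sum $ \sum _{j = j_0} ^\infty \phi _j $ with a tail starting at $ j_0 $ already has norm $ \lesssim 2^{-s/2} 2^{-(j_0 - s - k_0)/2} \lvert I_0 \rvert ^{1/2} $ by the Bessel estimate restricted to $ j \geq j_0 $, so the maximal function over $ j_0 $ in a dyadic band of scales $ [2^m, 2^{m+1}) $ is controlled by Rademacher--Menshov with constant $ \lesssim m \cdot 2^{-s/2} 2^{-2^{m-1}/2} \lvert I_0 \rvert ^{1/2} $; summing over $ m $ gives a finite total, and the loss from $ 2^{-s/2} $ down to $ 2^{-s/5} $ in \eqref{e:3prove} is exactly the slack we reserve to absorb any such bookkeeping constants. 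The main obstacle I anticipate is step (i): correctly organizing the $ TT^\ast $ off-diagonal sum so that the dichotomy defining $ \mathcal N (j,s) $ is used in the right place, and checking that \eqref{e:BI} genuinely upgrades the naive $ \lVert B_{j-s} \mathbf 1_I \rVert _1 \leq \lVert B_{j-s} \rVert _1 $ bound to give the $ 2^{-s} $ gain per scale rather than merely $ 2^{-j} $; everything downstream is then routine interpolation-free summation plus Lemma~\ref{l:RM}.
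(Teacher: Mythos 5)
Your step (i) is essentially sound, and in fact mirrors the paper's diagonal/off-diagonal computation: using \eqref{e:non} and \eqref{e:BI} one gets $\lVert T_{\mathcal N(j,s)}B_{j-s}\rVert_2^2\lesssim 2^{-s}\lVert B_{j-s}\rVert_1$ (your factor $\lvert I\rvert^{-2}$ should be $\lvert I\rvert^{-1}$, but your conclusion is the right one), and the far off-diagonal terms, estimated via Lemma~\ref{l:different}, are $\lesssim 2^{-j}\lVert B_{j-s}\rVert_1$, which is summable with a $2^{-s}$ gain because only scales $j\geq s+k_0$ occur. So a Bessel bound of order $s^{1/2}2^{-s/2}\lvert I_0\rvert^{1/2}$ for the scale-indexed family is available.

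The genuine gap is in step (ii), i.e.\ in how you propose to pay the Rademacher--Menshov logarithm. The number $N$ of active scales is determined by $f$, not by $s$, and can be arbitrarily large, so $\log(2+N)$ cannot be absorbed directly. Your two rescue mechanisms both rely on a decay in the scale (or block) parameter that does not exist for the non-standard collection: the claim $\lVert\sum_{j\geq j_0}\phi_j\rVert_2\lesssim 2^{-s/2}2^{-(j_0-s-k_0)/2}\lvert I_0\rvert^{1/2}$ is false in general, since the per-scale bound is $\lVert\phi_j\rVert_2^2\lesssim 2^{-s}\lVert B_{j-s}\rVert_1$ and the bad mass $\lVert B_{j-s}\rVert_1$ may be concentrated at arbitrarily large $j$ (geometric decay in $j$ is a feature of the \emph{standard} collection, Lemma~\ref{l:standard}, not of $\mathcal N(j,s)$). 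Likewise the block Bessel constants $(\sum_{j\in[2^m,2^{m+1})}\lVert\phi_j\rVert_2^2)^{1/2}$ carry no decay in $m$; spreading the mass over $M$ dyadic blocks makes your block-by-block sum grow like $M^{3/2}2^{-s/2}\lvert I_0\rvert^{1/2}$, which is unbounded. The paper circumvents exactly this obstacle by a different device: the Carleson measure estimate \eqref{e:CM} (each $I\in\mathcal N(j,s)$ contains a bad interval carrying mass $\gtrsim 2^{-s}\lvert I\rvert$), a John--Nirenberg exceptional set $F_s$ as in \eqref{e:Fs} with $\lvert F_s\rvert\leq\frac14\lvert I_0\rvert$, and, off $F_s$, a reindexing of the surviving intervals by \emph{generations} rather than scales; there are only $u_0\approx s2^s$ generations, so Lemma~\ref{l:RM} costs only $\log u_0\lesssim s$, the Bessel bound \eqref{e:N3} absorbs it, and a recursion inside $F_s$ upgrades \eqref{e:st2} to the unrestricted bound \eqref{e:3prove}. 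Some such mechanism bounding the number of Rademacher--Menshov "generations" in terms of $s$ (together with a treatment of the excluded set) is the missing idea; without it your argument does not close.
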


With this Lemma proved, interpolate between \eqref{e:infty} and \eqref{e:3prove} to see that 
\begin{equation*}
 \Bigl\langle 
 \sup _{j_0 \geq s+k_0} 
\Bigl\lvert  
\sum_{j = j_0} ^{\infty } T _{\mathcal N(j,s)} B _{j-s}\Bigr\rvert ,g 
\Bigr\rangle \lesssim    2 ^{-s/5q}  \lvert  I_0\rvert  
\langle f\rangle_{I_0}\langle g\rangle_{I_0,q'}, \qquad  2< q < \infty. 
\end{equation*}
A sum over $ s \geq 0$ contributes a  power of $ q$, completing the proof of \eqref{e:q*}.  

\begin{proof}

Here is the main claim.  
There is a subset $ F_s\subset I_0$ with $ \lvert  F_s\rvert \leq \tfrac 14 \lvert  I_0\rvert  $ so that setting 
$ \mathcal N ^{\sharp}(j,s) = \{I\in \mathcal N (j,s) \,:\, I\not\subset F_s\}$,  
\begin{gather}  \label{e:st2}
\Bigl\lVert 
\sup _{j_0 \geq s+k_0} 
\Bigl\lvert  
\sum_{j = j_0} ^{\infty } T _{\mathcal N  ^{\sharp} (j,s)} B _{j-s}
\Bigr\rvert\, 
\Bigr\rVert_2 
\lesssim   2 ^{-s/5} |I_0|^{1/2}. 
\end{gather}
This is the inequality we want, except that we have excluded the intervals $I\subset F_s$. 
But, we can recurse inside the set $F_s$, and since it has small measure relative to $I_0$, we can conclude that 
the unrestricted estimate below holds. 
\begin{gather}  
\Bigl\lVert 
\sup _{j_0 \geq s+k_0} 
\Bigl\lvert  
\sum_{j = j_0} ^{\infty } T _{\mathcal N   (j,s)} B _{j-s}
\Bigr\rvert\, 
\Bigr\rVert_2 
\lesssim   2 ^{-s/5} |I_0|^{1/2}. 
\end{gather}
Summing this over $t\geq 0$ concludes the estimate \eqref{e:3prove}.

To prove \eqref{e:st2}, we will the Rademacher-Menshov Lemma \ref{l:RM}, which requires that the intervals in $\mathcal{N}_{j,s}$ have bounded overlaps.  This is almost true, by this  Carleson measure estimate.  
\begin{equation}\label{e:CM}
\sum_{j = s+k_0} ^{\infty } \sum_{I\in \mathcal N (j,s) \,:\, I\subset J} \lvert  I\rvert \lesssim  2 ^{s} \lvert  J\rvert  , \qquad  J \in \mathcal I. 
\end{equation}
  Indeed, for each $I \in \mathcal N (j,s)$ there is one `bad' interval $K\in \mathcal{B}(j-s)$, so that 
  $\int _I B _{j-s} \geq \int _K B _{j-s} \gtrsim 2^{-s} \lvert I\rvert$. (See \eqref{e:B}.)  Therefore, by \eqref{e:Bsum}, 
\begin{align*}
\sum_{j = s+k_0} ^{\infty } \sum_{I\in \mathcal N (j,s,t) \,:\, I\subset J} \lvert  I\rvert
&\lesssim  2^{s} \sum_{j = s+k_0} ^{\infty } \sum_{I\in \mathcal N (j,s,t) \,:\, I\subset J} \lVert B _{j-s} \mathbf 1_{I}\rVert_1 
\lesssim  {2^{s}}\lVert b \mathbf 1_{J}\rVert_1 \lesssim  2^{s} \lvert  J\rvert.  
\end{align*}
This proves \eqref{e:CM}.

\medskip 

Take the set $ F_s$ of  \eqref{e:st2} to be 
\begin{equation}\label{e:Fs}
F_s = 
\Bigl\{
\sum_{j = s+k_d} ^{\infty } \sum_{I\in \mathcal N (j,s,t)} \mathbf 1_{I} >  C s 2 ^{s} 
\Bigr\}. 
\end{equation}
It follows from \eqref{e:CM}, the fact that each $I \in \mathcal{N}(j,s)$ is contained in $\{ M_{HL}f \gtrsim 2^{-s}\}$, and the John-Nirenberg inequality that $ \lvert  F_s\rvert \leq \frac{1}{4} \lvert  I_0\rvert$, provided $C$ is sufficiently large.  

We turn our attention to the maximal  $ L ^2 $ estimate, which will follow from the Rademacher-Menshov Lemma \ref{l:RM}.  To set up its application, this additional noation is required.  
Set $ u_0 =   C 2 ^{s} $. 
Define 
\begin{equation*}
\beta _j = \sum_{J\in \mathcal K_j} T _{J} b , \qquad 1\leq j \leq  u_0, 
\end{equation*}
where  $ \mathcal K_{j}$ is a generational decomposition of $ \mathcal N ^{\sharp} = \bigcup _{j=k_0} ^{\infty }  \mathcal  N  ^{\sharp}_{j,s,t}  $.  
Namely $ \mathcal K_1$ is the minimal elements of $  \mathcal  N  ^{\sharp}$, and $ \mathcal K_2$ is the minimal elements of 
$ \mathcal  N  ^{\sharp}\setminus \mathcal K_1$, and so on. The point of this choice is that we necessarily have $ \lvert  J\rvert \geq 2 ^{j} $ for $ J\in \mathcal K_j$.

This is the core of the argument.  
We show that for any choice of coefficients $ c_j \in \{-1, 0 ,1\}$, 
\begin{equation}\label{e:N3}
\Bigl\lVert   \sum_{j=k_0} ^{u_0}  c_j \beta _j   \Bigr\rVert_2 
\lesssim  s 2^{-s/2}  \lvert  I_0\rvert ^{1/2}  . 
\end{equation}
This verifies the assumption \eqref{e:bessel} of Rademacher-Menshov Lemma~\ref{l:RM}. By \eqref{e:RM}, we can control 
the maximal truncations at a cost of a factor of $ \log u_0  \lesssim  s  $.  
And, then  \eqref{e:st2} follows. 

\medskip

We square out the norm on the left in \eqref{e:N3}. On the one hand, we have from \eqref{e:same}, and the assumption that the interval is `non-standard', see \eqref{e:non},  that 
\begin{align} \label{e:D}
 \sum_{j=k_d} ^{u_0}  c_j ^2 \lVert \beta _j  \rVert_2 ^2 
 &= \sum_{J\in \mathcal  N  ^{\sharp}_{j,s,t}} \lVert T_J f\rVert_2 ^2 
 \lesssim  \sum_{  J \in \mathcal  N  ^{\sharp}_{j,s,t}} \lvert  J\rvert^{-1}  {\lVert  B _{j-s}  \mathbf{1}_{[-1/2,1/2]} \ast  (B _{j-s}\mathbf 1_{J'})\rVert_1  }  
\\
& \lesssim 
 \sum_{  J \in \mathcal  N  ^{\sharp}_{j,s,t}} 
  \lvert  J\rvert^{-1} {\lVert   \mathbf{1}_{[-1/2,1/2]} \ast  (B _{j-s}\mathbf 1_{J'})\rVert_ \infty }  \lVert  B _{j-s} \rVert_1 
  \\  \label{e:xjj}
  & \lesssim 2^{-s}
   \sum_{  J \in \mathcal  N  ^{\sharp}_{j,s,t}} 
    \lVert  B _{j-s} \rVert_1 
    \lesssim 2 ^{-s} \lvert I_0\rvert . 
\end{align}
We have appealed to \eqref{e:BI} to control the $L^\infty$ norm,  and  \eqref{e:Bsum} to control the last sum.

Now, for $ 1\leq k < j \leq u_0$, notice that we have from above, that 
\begin{equation} \label{e:jj}
\lvert  \langle \beta _j, \beta _k \rangle\rvert \lesssim \lVert \beta _j\rVert_2 \lVert \beta _k\rVert_2 
\lesssim  2 ^{-s}\lvert  I_0\rvert. 
\end{equation}
This is useful when $ j, k$ are relatively small.

Otherwise,  recalling that $ k_0$ is a fixed large integer, 
for $ k + k_0 < j$,  
\begin{align}
\lvert   \langle \beta _j , \beta _k \rangle \rvert &\leq 
\sum_{J\in \mathcal K_j}  
\sum_{K\in \mathcal K_k \,:\, K\subset J}\lvert   \langle T_K ^{\ast} T_J B _{j (J)-s} ,   B _{j (K) -s}  \mathbf 1_{K}   \rangle \rvert 
\end{align}
Above, we are using the notation $ \lvert  K\rvert = 2 ^{j (K)} $.  Then, again using the construction, 
and  the stronger orthogonality condition \eqref{e:different}, 
\begin{align*}
\lvert   \langle T_K ^{\ast} T_J B _{j (J)-s} ,   B _{j (J) -s}  \mathbf 1_{K'}   \rangle \rvert   
&\lesssim \lvert  J\rvert ^{-2} \lVert B _{j (J)-s}  \mathbf 1_{J} \rVert_1  \lVert B _{j (K) -s} \mathbf 1_{K}\rVert_1 
\\
& \lesssim  2 ^{-j}  \tfrac {\lvert  K\rvert } {\lvert  J\rvert } \lVert B _{j (J)-s}  \mathbf 1_{J} \rVert_1. 
\end{align*}
Here, we used that intervals $J \in \mathcal{J}_j$ must have length at least $ 2^{j}$.
Combining these estimates, we have 
\begin{align*}
\lvert  \langle \beta _j, \beta _k \rangle\rvert 
&\lesssim 
2 ^{- j }
\sum_{J\in \mathcal K_j}  
\sum_{K\in \mathcal K_k \,:\, K\subset J} 
 \tfrac {\lvert  K\rvert } {\lvert  J\rvert } \lVert B _{j (J)-s}  \mathbf 1_{J} \rVert_1
\\
& \lesssim 
2 ^{- j }
\sum_{J\in \mathcal K_j}   \lVert B _{j (J)-s}  \mathbf 1_{J} \rVert_1 \lesssim 2 ^{- j} \lvert  I_0\rvert.  
\end{align*}
Combining this estimate with \eqref{e:xjj} and \eqref{e:jj}, we conclude \eqref{e:N3}, completing the proof.

\end{proof}

\bibliographystyle{amsplain}	

\begin{bibsection}

\begin{biblist}

\bib{160506401}{article}{
   author = {{Benea}, Cristina}, 
     author = {{Bernicot}, Fr\'ed\'eric},
   author={Luque, Teresa},
    title = {Sparse bilinear forms for Bochner Riesz multipliers and applications},
   eprint = {http://arxiv.org/abs/1605.06401},
}

\bib{MR3531367}{article}{
   author={Bernicot, Fr{\'e}d{\'e}ric},
   author={Frey, Dorothee},
   author={Petermichl, Stefanie},
   title={Sharp weighted norm estimates beyond Calder\'on-Zygmund theory},
   journal={Anal. PDE},
   volume={9},
   date={2016},
   number={5},
   pages={1079--1113},
   issn={2157-5045},
   review={\MR{3531367}},
}

%


\bib{MR883667}{article}{
   author={Chanillo, Sagun},
   author={Christ, Michael},
   title={Weak $(1,1)$ bounds for oscillatory singular integrals},
   journal={Duke Math. J.},
   volume={55},
   date={1987},
   number={1},
   pages={141--155},
   issn={0012-7094},
   review={\MR{883667}},
}

\bib{MR796439}{article}{
   author={Christ, Michael},
   title={On almost everywhere convergence of Bochner-Riesz means in higher
   dimensions},
   journal={Proc. Amer. Math. Soc.},
   volume={95},
   date={1985},
   number={1},
   pages={16--20},
   issn={0002-9939},
   review={\MR{796439}},
}

\bib{MR951506}{article}{
   author={Christ, Michael},
   title={Weak type $(1,1)$ bounds for rough operators},
   journal={Ann. of Math. (2)},
   volume={128},
   date={1988},
   number={1},
   pages={19--42},
   issn={0003-486X},
   review={\MR{951506}},
}


\bib{CCPO}{article}{
  author = {{Conde-Alonso}, Jos{\'e} M.},
    author = {{Culiuc}, A.},
   author={Di Plinio, F.}, 
   author={Ou, Y.},
   title={A Sparse Domination Principle for Rough Singular Integrals}, 
 eprint={http://arxiv.org/abs/1612.09201},
}

\bib{14094351}{article}{
  author = {{Conde-Alonso}, Jos{\'e} M.},
   author={Rey, Guillermo},
title={A pointwise estimate for positive dyadic shifts and some applications},
journal={Math. Ann.},
year={2015},
pages={1--25},
}

\bib{160305317}{article}{
   author = {{Culiuc}, A.},
   author={Di Plinio, F.}, 
   author={Ou, Y.},
    title = {Domination of multilinear singular integrals by positive sparse forms},
   eprint={http://arxiv.org/abs/1603.05317},
}

%
%

\bib{MR2403711}{article}{
   author={Demeter, Ciprian},
   author={Tao, Terence},
   author={Thiele, Christoph},
   title={Maximal multilinear operators},
   journal={Trans. Amer. Math. Soc.},
   volume={360},
   date={2008},
   number={9},
   pages={4989--5042},
   issn={0002-9947},
   review={\MR{2403711}},
}

\bib{MR2900003}{article}{
   author={Ding, Yong},
   author={Liu, Honghai},
   title={Uniform weighted estimates for oscillatory singular integrals},
   journal={Forum Math.},
   volume={24},
   date={2012},
   number={2},
   pages={223--238},
   review={\MR{2900003}},
}

\bib{MR3625108}{article}{
   author={Lacey, Michael T.},
   title={An elementary proof of the $A_2$ bound},
   journal={Israel J. Math.},
   volume={217},
   date={2017},
   number={1},
   pages={181--195},
   issn={0021-2172},
   review={\MR{3625108}},
   doi={10.1007/s11856-017-1442-x},
}

\bib{MR3611077}{article}{
   author={Lacey, Michael T.},
   author={Spencer, Scott},
   title={Sparse bounds for oscillatory and random singular integrals},
   journal={New York J. Math.},
   volume={23},
   date={2017},
   pages={119--131},
   issn={1076-9803},
   review={\MR{3611077}},
}

\bib{MR2721744}{article}{
  author={Lerner, Andrei K.},
  title={A pointwise estimate for the local sharp maximal function with applications to singular integrals},
  journal={Bull. Lond. Math. Soc.},
  volume={42},
  date={2010},
  number={5},
  pages={843--856},
}
   
\bib{MR3085756}{article}{
  author={Lerner, Andrei K.},
  title={A simple proof of the $A_2$ conjecture},
  journal={Int. Math. Res. Not. IMRN},
  date={2013},
  number={14},
  pages={3159--3170},
}

\bib{170105170}{article}{
   author = {{Li}, K. and {P{\'e}rez}, C. and {Rivera-R{\'{\i}}os}, I.~P. and 
	{Roncal}, L.},
    title = {Weighted norm inequalities for rough singular integral operators},
   eprint = {http://arxiv.org/abs/1701.05170},
}

\bib{MR2545246}{article}{
   author={Lie, Victor},
   title={The (weak-$L\sp 2$) boundedness of the quadratic Carleson
   operator},
   journal={Geom. Funct. Anal.},
   volume={19},
   date={2009},
   number={2},
   pages={457--497},
   issn={1016-443X},
   review={\MR{2545246}},
}

\bib{11054504}{article}{
   author={Lie, Victor},
    title =     {The Polynomial Carleson Operator},
   eprint = {http://arxiv.org/abs/1105.4504},
}

%
%
%
%


\bib{MR822187}{article}{
   author={Ricci, F.},
   author={Stein, E. M.},
   title={Oscillatory singular integrals and harmonic analysis on nilpotent
   groups},
   journal={Proc. Nat. Acad. Sci. U.S.A.},
   volume={83},
   date={1986},
   number={1},
   pages={1--3},
   review={\MR{822187}},
}

\bib{MR890662}{article}{
   author={Ricci, Fulvio},
   author={Stein, E. M.},
   title={Harmonic analysis on nilpotent groups and singular integrals. I.
   Oscillatory integrals},
   journal={J. Funct. Anal.},
   volume={73},
   date={1987},
   number={1},
   pages={179--194},
   review={\MR{890662}},
}

\bib{MR1782909}{article}{
   author={Sato, Shuichi},
   title={Weighted weak type $(1,1)$ estimates for oscillatory singular
   integrals},
   journal={Studia Math.},
   volume={141},
   date={2000},
   number={1},
   pages={1--24},
   issn={0039-3223},
   review={\MR{1782909}},
}

\bib{MR1879821}{article}{
   author={Stein, Elias M.},
   author={Wainger, Stephen},
   title={Oscillatory integrals related to Carleson's theorem},
   journal={Math. Res. Lett.},
   volume={8},
   date={2001},
   number={5-6},
   pages={789--800},
   review={\MR{1879821}},
}

\end{biblist}
\end{bibsection}

\end{document}